\documentclass{article}

\usepackage[left=30truemm, right=30truemm]{geometry}
\usepackage{amsmath,amssymb,amsthm}
\usepackage{bm}
\usepackage{mathtools}
\usepackage{xcolor}
\usepackage{graphicx}
\usepackage{enumitem}[shortlabels]

\usepackage{xcolor,graphicx}
\usepackage{tikz}
\usetikzlibrary{calc}

\theoremstyle{definition}
\newtheorem{theorem}{Theorem}[]
\newtheorem{proposition}[theorem]{Proposition}
\newtheorem{corollary}[theorem]{Corollary}

\newtheorem{lemma}[theorem]{Lemma}
\newtheorem{claim}[]{Claim}

\newtheorem{conjecture}[theorem]{Conjecture}

\numberwithin{equation}{section}

\title{New type degree conditions for a graph to have a 2-factor}
\author{Masaki Kashima\thanks{School of Fundamental Science and Technology,
Graduate School of Science and Technology, Keio University, 3-14-1 Hiyoshi, Kohoku-ku, Yokohama 223-8522, Japan. email: masaki.kashima10@gmail.com}}

\begin{document}

\maketitle

\begin{abstract}
    A 2-factor of a graph is a 2-regular spanning subgraph.
    For a graph $G$ and an independent set $I$ of $G$, let $\delta_G(I)$ denote the minimum degree of vertices contained in $I$.
    We show that (1) if every independent set $I$ of $G$ satisfies $|I|\leq \delta_G(I)-1$, then $G$ has a 2-factor and that (2) if every independent set $I$ of $G$ satisfies $|I|\leq \delta_G(I)$, then $G$ has a 2-factor unless $G$ is isomorphic to a graph in completely determined exceptional graphs.
    It can be easily shown that the assumption of (1) is a relaxation of the Dirac condition on Hamiltonicity of graphs, and that the assumption of (2) is a relaxation of the Chv\'{a}tal-Erd\H{o}s condition on Hamiltonicity of graphs.
    Furthermore, for graphs with the assumption of (1), we show some results on a 2-factor with a bounded number of cycles.
\end{abstract}
\textbf{Keywords:} 2-factor, minimum degree, degree sum, independent set, Chv\'{a}tal-Erd\H{o}s condition

\section{Introduction}

Throughout the paper, we only consider simple, finite, and undirected graphs.
For a positive integer $k$, let $[k]$ denote the set of positive integers at most $k$.
For a graph $G$, let $|G|$ denote the order of $G$.
Let $d_G(v)$ denote the degree of a vertex $v$ of $G$, and let $\delta(G)$ denote the minimum degree of $G$.

A \emph{2-factor} of a graph is a spanning 2-regular subgraph.
Motivated by studies on Hamilton cycles, which is a connected 2-factor, sufficient conditions for a graph to have a 2-factor have been actively studied over decades.

For a graph $G$ and a positive integer $k$, we define 
\[\sigma_k(G):=\min\left\{\sum_{v\in I}d_G(v)\;\middle|\; I\text{ is an independent set of order $k$ of $G$}\right\}\]
if $\alpha(G)\geq k$, and define $\sigma_k(G):=\infty$ otherwise, where $\alpha(G)$ is the independence number of $G$.
Note that $\sigma_1(G)=\delta(G)$.
Ore's well-known result on Hamilton cycles in \cite{O1960} states that every graph $G$ with $\sigma_2(G)\geq |G|$ has a Hamilton cycle, and Brandt et al.~\cite{BCFGL1997} extended this result as follows.

\begin{theorem}[\cite{BCFGL1997}]\label{thm:sigma2 kcycle}
    Let $G$ be a graph of order $n$ and let $k$ be a positive integer.
    If $n\geq 4k$ and $\sigma_2(G)\geq n$, then $G$ has a 2-factor with exactly $k$ cycles.
\end{theorem}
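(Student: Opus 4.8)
The plan is to induct on $k$. For the base case $k=1$ a $2$-factor with exactly one cycle is a Hamilton cycle, and since $\sigma_2(G)\ge n$ this is furnished by Ore's theorem (valid since $n\ge 4\ge 3$). For the inductive step I would assume the statement for $k-1$ and, using $n\ge 4k>4(k-1)$, start from a $2$-factor $F$ of $G$ with exactly $k-1$ cycles. The whole problem then reduces to a single \emph{splitting} step: produce from $F$ a new $2$-factor with exactly $k$ cycles by breaking one more cycle into two. Thus the engine of the proof is a lemma asserting that, under $\sigma_2(G)\ge n$ and $n\ge 4k$, any $2$-factor with fewer than $k$ cycles admits such a split.

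The elementary move is the following. Write a cycle of $F$ as $C=v_1v_2\cdots v_\ell v_1$ (indices mod $\ell$). If there are indices $i<j$ with both $v_{i+1}v_j\in E(G)$ and $v_iv_{j+1}\in E(G)$ (a pair of \emph{crossing chords}), then deleting the cycle-edges $v_iv_{i+1}$ and $v_jv_{j+1}$ and inserting these two chords replaces $C$ by the two cycles $v_{i+1}v_{i+2}\cdots v_j v_{i+1}$ and $v_{j+1}v_{j+2}\cdots v_i v_{j+1}$, raising the number of cycles by exactly one. For both new cycles to be genuine I need each arc to contain at least three vertices, so I want to perform this on a cycle of length $\ell\ge 6$. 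To locate crossing chords I would fix a long cycle $C$ and run a Bondy--Chv\'atal-style counting argument over the positions of the neighbours of the cycle vertices: if no crossing pair existed, the obstructed positions would force a set of pairwise non-adjacent vertices whose degree sum is strictly below $n$, contradicting $\sigma_2(G)\ge n$. Care is needed because $\sigma_2$ only constrains non-adjacent pairs, so the bookkeeping must be set up on non-adjacent vertices of $C$ and must account for neighbours lying on the other cycles of $F$; I expect this part to be routine but delicate.

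The main obstacle is the case in which $F$ has no cycle of length $\ge 6$, so that no single cycle can be split in place. Since the total length is $n\ge 4k$ while there are only $k-1$ cycles, this can happen only when many cycles have length $4$ or $5$, and here the hypothesis $n\ge 4k$ is used through the slack $n-3k\ge k>0$: there is enough total length to repartition a small collection of short cycles into one more cycle than before. Concretely I would take two cycles $C,C'$ of $F$, use $\sigma_2(G)\ge n$ to find crossing edges between them (forced because their union is large and the degrees are high), merge them into a single long cycle, and then apply the splitting move to that long cycle, for a net gain of one cycle. Making this robust requires a case analysis guaranteeing the inter-cycle edges exist for every admissible configuration of short cycles, and checking that the merged cycle is long enough that the subsequent split leaves two cycles of length at least three. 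This reconciliation of the short-cycle case with the exact count $k$ is where I expect the real work to lie, and where the constant $4$ in $n\ge 4k$ should be pinned down.
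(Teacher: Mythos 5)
You should first note that the paper does not prove Theorem~\ref{thm:sigma2 kcycle}: it is quoted from Brandt et al.~\cite{BCFGL1997} as a known result, so there is no in-paper argument to measure your sketch against, and I can only assess it on its own terms. As it stands, the two steps you yourself flag as ``routine but delicate'' and ``where the real work lies'' are not routine, and one of them is set up incorrectly. For the in-place split: when $C$ is a non-spanning cycle of the $2$-factor $F$, the hypothesis $\sigma_2(G)\geq n$ does not localize neighbours to $C$, and a cycle of length $\geq 6$ in $F$ may be induced (chordless) in $G$, with every vertex of $C$ having all of its remaining neighbours on the other cycles of $F$. So the existence of a long cycle in $F$ does not by itself yield a splittable cycle, and no Bondy--Chv\'atal-style count over positions on $C$ can conjure chords that are not there; any correct argument must from the outset involve inter-cycle edges and a careful choice of which $2$-factor (or which cycle system) to work with, which is precisely the content of the theorem.

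The second gap is arithmetic and is fatal to your short-cycle case as described: merging two cycles of $F$ into one long cycle decreases the number of cycles by one, and a single splitting move increases it by one, so the combined operation has net gain \emph{zero}, not one. To gain a cycle from a collection of $4$- and $5$-cycles you would have to split the merged cycle into \emph{three} cycles (for instance, two $5$-cycles into lengths $3+3+4$), which the elementary crossing-chord move you define cannot do; and the all-short configuration genuinely occurs, e.g.\ $k-1$ cycles of length $5$ whenever $5(k-1)\geq 4k$, i.e.\ $k\geq 5$. For comparison, the published proof in \cite{BCFGL1997} runs in the opposite direction from your plan: it first produces $k$ pairwise disjoint cycles under the Ore-type condition and then absorbs the uncovered vertices into them by an insertion argument, very much in the spirit of Lemma~\ref{lem:1 vertex inclusion} of this paper, rather than producing a spanning $2$-factor with too few cycles and refining it by splits.
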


As an extension of Ore's theorem to the minimum degree sum condition, Bondy~\cite{B1980} showed that if a $\kappa$-connected graph $G$ satisfies $\sigma_{\kappa+1}(G)>\frac{1}{2}(\kappa+1)(|G|-1)$, then $G$ has a Hamilton cycle.
This result was also extended to a sufficient condition for a graph to have a 2-factor with a prescribed number of cycles in Chiba~\cite{C2018}.

\begin{theorem}[\cite{C2018}]\label{thm:sigmakappa kcycle}
    Let $G$ be a $\kappa$-connected graph of order $n$ and let $k$ be a positive integer.
    If $n\geq 5k-2$ and $\sigma_{\left\lceil \kappa/k\right\rceil+1}(G)\geq \frac{1}{2}(\left\lceil \kappa/k \right\rceil+1)(n-1)$, then $G$ has a 2-factor with exactly $k$ cycles.
\end{theorem}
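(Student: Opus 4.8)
The plan is to establish the result by controlling the number of cycles of a 2-factor from both sides and then tuning it to be exactly $k$. Write $t:=\lceil \kappa/k\rceil$, so that the hypothesis reads $\sigma_{t+1}(G)\ge \tfrac12(t+1)(n-1)$. When $k=1$ we have $t=\kappa$ and the statement is exactly Bondy's Hamilton-cycle theorem, so that case can be quoted directly or used as the base of an induction. For general $k$ I would separate the argument into two independent tasks: (I) produce a 2-factor of $G$ with \emph{at most} $k$ cycles, and (II) show that whenever $G$ has a 2-factor with $c$ cycles and $1\le c<k$, it also has one with $c+1$ cycles. Combining (I) and (II) and halting the ascent at $k$ yields a 2-factor with exactly $k$ cycles.

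For task (I) I would argue extremally. Standard Tutte-type factor criteria together with the degree hypothesis should already guarantee that $G$ has \emph{some} 2-factor; among all 2-factors choose one, say $F$, whose number of cycles $c$ is minimum, and suppose for contradiction that $c>k$. The aim is to \emph{merge} two cycles of $F$ into one, contradicting minimality. Contracting each cycle of $F$ to a single vertex and invoking $\kappa$-connectivity produces many edges running between the cycles; a failure to find two suitably placed crossing edges (an edge $xy$ with $x\in C_i$, $y\in C_j$ together with a second edge enabling a rerouting that fuses $C_i$ and $C_j$) should force a large independent set of successor/predecessor vertices along the cycles, each of small degree. This is where the index $t+1=\lceil\kappa/k\rceil+1$ enters: since $F$ has $c>k$ cycles, distributing the $\kappa$ connecting edges over the cycles lets one extract an independent set of size $\lceil\kappa/k\rceil+1$, and non-mergeability would bound each such degree by roughly $\tfrac12(n-1)$, so that their degree sum falls below $\tfrac12(t+1)(n-1)$, contradicting the hypothesis. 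Hence $c\le k$.

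Task (II) is the splitting step, and it is here that the order bound $n\ge 5k-2$ is used. If $F$ has $c<k$ cycles then, since its cycles span all $n$ vertices, the longest cycle $C$ has length at least $n/c\ge n/(k-1)=5+\tfrac{3}{k-1}>5$, hence at least $6$. The goal is to replace $C$ by two disjoint cycles on (essentially) the same vertex set, raising the count by one; a cycle of length at least $6$ carrying enough chords can be split in this way, and the $\sigma_{t+1}$ condition supplies the required chords. Iterating from the at-most-$k$ factor of (I) up to $k$ completes the proof, with $n\ge 5k-2$ ensuring that a splittable long cycle is available at every step below $k$.

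The main obstacle I expect is task (I): pinning down the extremal structure so that non-mergeability yields an independent set of \emph{exactly} size $\lceil\kappa/k\rceil+1$ with the correct degree bound. One must carefully track how the $\kappa$ connecting edges are shared among more than $k$ cycles and how rerouting along a pair of cycles creates the forbidden crossings, which is the delicate, Bondy-style part of the argument; controlling the interplay between the connectivity $\kappa$ and the \emph{actual} number of cycles $c$ (rather than $k$) when $c>k$ is the crux. By comparison, the splitting step (II) and the handling of small-order exceptions (absorbed by $n\ge 5k-2$) are comparatively routine.
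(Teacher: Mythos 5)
This theorem is quoted from Chiba~\cite{C2018}; the paper contains no proof of it, so there is nothing in-source to compare your argument against. Judged on its own terms, your proposal is a plan rather than a proof, and it has at least one step that fails as described. The fatal point is task (II), the splitting step. To replace a cycle $C$ of a 2-factor by two disjoint cycles on the same vertex set you need chords of $C$ in a rather specific configuration, but a cycle of a 2-factor can be induced in $G$: the hypothesis $\sigma_{t+1}(G)\geq \frac{1}{2}(t+1)(n-1)$ bounds degree sums of independent sets from below, which forces many edges \emph{somewhere} in $G$, but it does not place the neighbors of the vertices of $C$ on $C$ itself --- they may all lie on other cycles of the 2-factor. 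So the claim that ``the $\sigma_{t+1}$ condition supplies the required chords'' is unsupported, and with it the whole interpolation strategy (get at most $k$ cycles, then climb one cycle at a time to exactly $k$) collapses. Indeed, the published proofs of results of this type (Brandt et al.~\cite{BCFGL1997}, Chiba~\cite{C2018}) go the other way: one first finds $k$ \emph{disjoint} cycles using the degree condition and the order bound, then takes a system of $k$ disjoint cycles that is optimal (covering as many vertices as possible, with further tie-breaking), and shows by a lengthy extremal analysis that an optimal system must be spanning. The order bound $n\geq 5k-2$ is used to guarantee the initial $k$ disjoint cycles and to control the insertion arguments, not to guarantee ``splittable'' long cycles.

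Two further gaps are worth naming. First, in task (I) the entire extremal analysis --- how non-mergeability of the cycles of a minimum 2-factor produces an independent set of size exactly $\lceil\kappa/k\rceil+1$ whose degree sum is below $\frac{1}{2}(\lceil\kappa/k\rceil+1)(n-1)$ --- is deferred with ``should force'' and ``roughly''; you correctly identify it as the crux, but that means the core of the argument is not done. Even the preliminary assertion that the hypotheses guarantee \emph{some} 2-factor is left to ``standard Tutte-type criteria'' without verification. Second, the base case $k=1$ is not literally Bondy's theorem: Bondy requires the strict inequality $\sigma_{\kappa+1}(G)>\frac{1}{2}(\kappa+1)(n-1)$, whereas the statement here allows equality, so the boundary case would need a separate argument.
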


The Chv\'{a}tal-Erd\H{o}s condition for a graph to have a 2-factor with prescribed number of cycles has been investigated.
Chv\'{a}tal and Erd\H{o}s~\cite{CE1972} showed that every graph $G$ with $\alpha(G)\leq \kappa(G)$ has a Hamilton cycle, where $\kappa(G)$ is the connectivity of $G$.
The assumption $\alpha(G)\leq \kappa(G)$ is called \emph{the Chv\'{a}tal-Erd\H{o}s condition}.
Kaneko and Yoshimoto~\cite{KY2003} showed that every 4-connected graph of order at least six with the Chv\'{a}tal-Erd\H{o}s condition has a 2-factor with two cycles, and conjectured the same conclusion for 2-connected graphs of sufficiently large order with the Chv\'{a}tal-Erd\H{o}s condition.
Chen et al.~\cite{CGKOSS2007} showed that for any given integer $k\geq 1$, every 2-connected graph with sufficiently large order has a 2-factor with exactly $k$ cycle, where the assumption on the order depends on $k$ and the independence number.

We refer the reader to Chiba and Yamashita~\cite{CY2018} for a comprehensive survey on sufficient conditions for a graph to have a 2-factor with a prescribed number of cycles.
In this paper, we consider new type assumptions for a graph to have a 2-factor, which relates to both degree conditions and the Chv\'{a}tal-Erd\H{o}s condition.

\subsection{New type degree conditions for a graph to have a 2-factor}

In a graph $G$ with the Chv\'{a}tal-Erd\H{o}s condition, the order of all independent sets are bounded by $\kappa(G)$, which is at most the minimum degree of $G$.
On the other hand, in proofs of some results, it is usual to consider one specified independent set and use the assumption to the independent set.
Therefore, we introduce the assumption that the order of each independent set is bounded by a function of the minimum degree of the independent set.

For a graph $G$, let $\mathcal{I}(G)$ be the family of independent sets of $G$.
For an independent set $I\in \mathcal{I}(G)$, let $\delta_G(I)$ be the minimum degree of the vertices contained in $I$.
The following are the first two results of this paper.

\begin{theorem}\label{thm:sigma0}
    Let $G$ be a graph.
    If every independent set $I$ of $G$ satisfies $|I|\leq \delta_G(I)-1$, then $G$ has a 2-factor.
\end{theorem}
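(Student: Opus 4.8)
The plan is to argue by contraposition through Tutte's $f$-factor theorem specialized to $f\equiv 2$ (the 2-factor case, due to Belck). For disjoint $S,T\subseteq V(G)$ write
\[
\gamma(S,T)=2|S|+\sum_{v\in T}\bigl(d_{G-S}(v)-2\bigr)-h(S,T),
\]
where $h(S,T)$ is the number of components $C$ of $G-S-T$ with $e_G(C,T)$ odd. That theorem says $G$ has no 2-factor iff $\gamma(S,T)<0$ for some such pair; since $\gamma(S,T)\equiv 2|G|\equiv 0\pmod 2$, this means $\gamma(S,T)\le -2$. So I would assume $G$ has no 2-factor, fix a pair $(S,T)$ with $\gamma(S,T)\le -2$ chosen extremally (first maximizing the deficiency $-\gamma(S,T)$, then minimizing $|S|+|T|$), and build from it an independent set violating $|I|\le\delta_G(I)-1$.

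The engine of the argument is that the components of $G-S-T$ produce independent sets: one vertex from each component gives a set with no edges, since an edge between two different components cannot exist. Taking a \emph{minimum-degree} representative of each of the $h(S,T)$ odd components yields an independent set $I$ with $|I|=h(S,T)$, so the hypothesis forces $\delta_G(I)\ge h(S,T)+1$; that is, every vertex of every odd component has $G$-degree at least $h(S,T)+1$. Independently of any reduction, each odd component sends an odd, hence positive, number of edges to $T$, so $h(S,T)\le\sum_{v\in T}d_{G-S}(v)$; rearranging $\gamma(S,T)\le -2$ and inserting this inequality gives $|T|\ge|S|+1$, and in particular $T\neq\varnothing$.

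To finish I would produce a clashing upper bound, where the argument splits according to the structure forced by extremality. Standard exchange arguments on the chosen barrier — moving a vertex of $T$ with $d_{G-S}(v)\ge 2$ out of $T$, and a vertex with two neighbours in $T$ into $S$ — should let me assume that $T$ is independent and that $d_{G-S}(v)\le 1$ for every $v\in T$, whence $d_G(v)\le|S|+1$ for all $v\in T$. Applying the hypothesis to the independent set $T$ then gives $|T|\le\delta_G(T)-1\le|S|$, contradicting $|T|\ge|S|+1$. The complementary degenerate situations, in which this reduction is unavailable, are exactly those where some component is small (for instance a single low-degree vertex, as in a star); there the representative independent set of the previous paragraph already contains a vertex of degree below $h(S,T)+1$, contradicting that paragraph directly.

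The main obstacle is the third step: carrying out the exchange arguments rigorously and checking that they preserve $\gamma(S,T)\le -2$ while delivering an independent $T$ with $d_{G-S}(v)\le 1$, together with the bookkeeping that keeps the two bounds $|T|\ge|S|+1$ and $|T|\le|S|$ genuinely incompatible in every case. The delicate point throughout is that the hypothesis only bites when the constructed independent set actually contains a vertex of small degree, so the real work is to guarantee that a minimum-degree representative of a component, or a minimum-degree vertex of $T$, is as small as the barrier inequality requires; balancing the regime of large components (controlled through $|S|$ and $|T|$) against that of small components (handled directly by the representative set) is where the careful analysis will concentrate.
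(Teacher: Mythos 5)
Your overall strategy --- Tutte's 2-factor theorem together with independent sets built from $T$ and from one representative per odd component --- is viable, but note first that it is not how the paper proves this statement: the paper proves Theorem~\ref{thm:sigma0} without Tutte's theorem, by taking a maximal subgraph admitting a 2-factor and absorbing one further vertex through a cycle-insertion argument (Lemma~\ref{lem:1 vertex inclusion}); the Tutte route is the one the paper uses for the companion Theorem~\ref{thm:sigma1}. The genuine gap in your proposal is the third step. Minimality of the barrier gives that $T$ is independent, that even components send no edges to $T$, and that each vertex of $T$ sends at most one edge to each odd component (Lemma~\ref{lem:minimal barrier}); it does not give, and no exchange argument will give, $d_{G-S}(v)\leq 1$ for all $v\in T$. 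Indeed, for $v\in T$ the quantity $d_{G-S}(v)$ equals the number of odd components meeting $N_G(v)$, which can be as large as $h_G(S,T)$: a vertex $v$ joined to one vertex of each of three disjoint triangles has the minimal barrier $S=\emptyset$, $T=\{v\}$ with $d_{G-S}(v)=3$. Your proposed exchange of deleting such a $v$ from $T$ only yields $\gamma(S,T\setminus\{v\})\leq\gamma(S,T)+2$, because $h$ can drop by as much as $d_{G-S}(v)$ when the components adjacent to $v$ merge into one; so the deficiency is not preserved and the reduction fails. Consequently the dichotomy ``either the reduction succeeds or some component is small'' is never established, and the intended clash between $|T|\leq |S|$ and $|T|\geq |S|+1$ is not reached.

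The argument can be completed, but with a different case split and one idea your proposal is missing: you must combine $T$ with the component representatives in a single independent set. Since $T$ is independent, the hypothesis gives $d_G(v)\geq |T|+1$ and hence $d_{G-S}(v)\geq a+1$ for every $v\in T$, where $a=|T|-|S|\geq 1$; substituting into $\gamma(S,T)\leq -2$ yields $h_G(S,T)\geq (a+1)|T|-2a+2\geq 2|T|>|S|+|T|$. If every odd component has more than $|T|$ vertices, then each contains a vertex outside $N_G(T)$ (it receives at most $|T|$ edges from $T$ by Lemma~\ref{lem:minimal barrier}), and adjoining one such vertex per odd component to $T$ gives an independent set of size $|T|+h_G(S,T)$ whose minimum degree is at most $d_G(v_1)\leq |S|+h_G(S,T)<|T|+h_G(S,T)$, a contradiction. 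Otherwise some odd component $D_1$ has at most $|T|$ vertices, and its vertex minimizing $e(\cdot,T)$ has $d_{G-S}$ at most $|V(D_1)|-1+|T|/|V(D_1)|\leq |T|$, hence $G$-degree at most $|S|+|T|<h_G(S,T)$, which contradicts the lower bound your representative set forces on the degrees of odd-component vertices. This is precisely the case analysis the paper carries out (under the weaker hypothesis) in the first part of the proof of Theorem~\ref{thm:sigma1}; under your stronger hypothesis it terminates immediately, whereas your plan of forcing $d_{G-S}(v)\leq 1$ on all of $T$ cannot be carried out.
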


\begin{theorem}\label{thm:sigma1}
    Let $G$ be a graph.
    If every independent set $I$ of $G$ satisfies $|I|\leq \delta_G(I)$, then either $G$ has a 2-factor or $G$ is isomorphic to a graph in $\mathcal{G}$, where $\mathcal{G}$ is defined below.
\end{theorem}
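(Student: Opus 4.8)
The plan is to argue by contradiction using Tutte's $f$-factor theorem specialized to the constant requirement $f\equiv 2$. In this form the theorem says that $G$ has no 2-factor precisely when there exist disjoint sets $S,T\subseteq V(G)$ with
\[
q(S,T)\;\ge\; 2+2|S|-2|T|+\sum_{v\in T}d_{G-S}(v),
\]
where $q(S,T)$ denotes the number of components $C$ of $G-(S\cup T)$ for which $e_G(C,T)$ is odd. So I would suppose that $G$ satisfies $|I|\le\delta_G(I)$ for every $I\in\mathcal I(G)$ but admits no 2-factor, fix such a pair $(S,T)$, and try to force $G\in\mathcal G$.

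First I would normalise the barrier by choosing, among all pairs satisfying the displayed inequality, one for which the deficiency $2|T|-2|S|-\sum_{v\in T}d_{G-S}(v)+q(S,T)$ is as large as possible and, subject to that, a secondary quantity (such as $|S|+|T|$) is optimised. These standard reductions should force $T$ to be independent and, more importantly, let me reduce to the situation where the components counted by $q(S,T)$ are single vertices, each adjacent only into $S\cup T$ and sending an odd number of edges to $T$; larger or higher-degree components would have to be treated or excluded separately.

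The engine is then to manufacture a large independent set and feed it to the hypothesis. Distinct components contribute pairwise non-adjacent representatives, so the singleton components $c_1,\dots,c_m$ counted by $q(S,T)$ already form an independent set $I$, and each satisfies $d_G(c_i)=e_G(c_i,S)+e_G(c_i,T)\le|S|+e_G(c_i,T)$. The displayed inequality forces $m=q(S,T)$ to be large relative to $|S|$ and $|T|$, whereas $|I|\le\delta_G(I)=\min_i d_G(c_i)$ caps it from above. Under the strict hypothesis $|I|\le\delta_G(I)-1$ of Theorem~\ref{thm:sigma0} these two bounds already collide; the new content is that under the weaker hypothesis $|I|\le\delta_G(I)$ they may meet, but only when every estimate used along the way is simultaneously tight. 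I would therefore run the same computation while recording exactly which inequalities become equalities.

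The main obstacle is precisely this equality analysis: classifying all configurations of $S$, $T$, and the singleton components for which the chain of estimates is tight throughout. I expect that this forces $|S|$ to be small, $T$ together with the components to exhibit a near-complete bipartite adjacency pattern, and the degrees to be pinned down to a short explicit list, which one then assembles into the family $\mathcal G$. The delicate bookkeeping lies in ruling out (or absorbing) non-singleton components and vertices of $T$ carrying extra edges, in handling the low-connectivity boundary cases by hand, and finally in verifying directly that every graph in $\mathcal G$ both satisfies $|I|\le\delta_G(I)$ for all independent $I$ and genuinely lacks a 2-factor, so that the list is exactly right.
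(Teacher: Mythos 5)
Your overall strategy --- Tutte's 2-factor theorem, a normalized barrier $(S,T)$, independent sets built from $T$ and component representatives, and a tightness analysis of the resulting inequalities --- is the same as the paper's, and the first normalization step (an extremal barrier has $T$ independent) is correct. But two of your reductions are genuine gaps rather than routine bookkeeping. First, no choice of extremal barrier forces the odd components of $G-(S\cup T)$ to be singletons: minimality only yields the properties of Lemma~\ref{lem:minimal barrier}, in particular $e(v,V(D))\leq 1$ for $v\in T$ and $D$ odd. Large components are not a side case to be ``excluded separately''; the paper must argue that if every odd component has at least $|T|+1$ vertices then each contains a vertex outside $N_G(T)$, so that $T$ together with these vertices is an independent set of size $|T|+h_G(S,T)$, and otherwise an averaging argument inside a small component produces a vertex $x_1$ with $d_G(x_1)\leq |S|+|T|$. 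Only at the very end, once $V(G)=S\cup T\cup N_G(T)$ and $d_{G-S}(v)=1$ for all $v\in T$ have been established, do the components collapse to single vertices.

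Second, the ``engine'' as stated does not close even in principle: from your independent set of singleton components you get $q(S,T)\leq |S|+|T|$, while the barrier inequality only gives $q(S,T)\geq 2|T|-2|S|+\sum_{v\in T}d_{G-S}(v)+2$, and these bounds need not meet (so your claim that they ``already collide'' under the hypothesis of Theorem~\ref{thm:sigma0} is unfounded; the paper in fact proves that theorem by an entirely different, constructive cycle-insertion argument). The quantitative core that is missing is the parameter $a=|T|-|S|>0$ and the dichotomy on $\min_{v\in T}d_{G-S}(v)$: the case $\geq a+1$ is refuted by the two independent sets above, and the case $=a$ forces $d_G(v_1)=|T|$, $V(G)=S\cup T\cup N_G(T)$, and $h_G(S,T)\leq a+|T|-1$, which combine to $(|T|-3)(a-1)\leq 0$. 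One must then eliminate the sporadic solutions $(|T|,a)=(2,2)$ and $(3,2)$ by hand before the case $a=1$, with all inequalities tight, yields the structure $\mathcal{G}_{|T|}$. Without this skeleton the classification into $\mathcal{G}$ is asserted rather than derived.
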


We define the set $\mathcal{G}$ of graphs as follows.
For a positive integer $\ell$, let $\mathcal{G}_{\ell}$ be the set of all graphs obtained by the join of a graph of order $\ell-1$ and the union of $\ell$ disjoint copies of the complete graph $K_2$.
The set $\mathcal{G}$ is defined as $\bigcup_{\ell=1}^{\infty}\mathcal{G}_\ell$.
Note that every graph in $\mathcal{G}$ does not have a 2-factor.

The following propositions show the relation between the assumptions of Theorems~\ref{thm:sigma0} and \ref{thm:sigma1} and known results.

\begin{proposition}\label{prop:sigma0 degree relation}
    If a graph $G$ satisfies $\delta(G)\geq \frac{|G|}{2}$, then either $|I|\leq \delta_G(I)-1$ for every independent set $I$ of $G$, or $|G|$ is even and $G$ contains a balanced complete bipartite graph as a spanning subgraph.
\end{proposition}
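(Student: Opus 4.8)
The plan is to argue by contraposition: assume the first alternative fails and deduce the second. Write $n=|G|$. If the condition ``$|I|\leq\delta_G(I)-1$ for every independent set $I$'' fails, then there is an independent set $I$ with $|I|\geq\delta_G(I)$. Since every vertex of $I$ has degree at least $\delta(G)\geq n/2$, we have $\delta_G(I)\geq\delta(G)\geq n/2$, and therefore $|I|\geq\delta_G(I)\geq n/2$.

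The key step is to bound $|I|$ from above using independence. Because $I$ is independent, every vertex $v\in I$ has all of its neighbors in $V(G)\setminus I$, so $d_G(v)\leq n-|I|$. Taking the minimum over $v\in I$ yields $\delta_G(I)\leq n-|I|$, whence $|I|\leq n-\delta_G(I)\leq n-n/2=n/2$. Combined with the lower bound $|I|\geq n/2$ from the previous paragraph, this forces $|I|=n/2$; in particular $n$ is even, which is the first half of the second alternative.

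Finally I would extract the bipartite structure from the fact that all the inequalities are now tight. For each $v\in I$ the chain $n/2\leq\delta(G)\leq d_G(v)\leq n-|I|=n/2$ collapses to equality, so $d_G(v)=n/2=|V(G)\setminus I|$. Since all $d_G(v)$ neighbors of $v$ lie in the set $V(G)\setminus I$ of exactly this size, $v$ must be adjacent to \emph{every} vertex of $V(G)\setminus I$. Hence the complete bipartite graph between $I$ and $V(G)\setminus I$, each part of size $n/2$, is a spanning subgraph of $G$; this is the desired balanced complete bipartite subgraph.

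As the whole argument is a short chain of inequalities, I do not expect a serious obstacle. The one point that requires care is to verify that the two bounds $|I|\geq n/2$ and $|I|\leq n/2$ genuinely hold and together pin $|I|$ down to exactly $n/2$, and then to read off from the forced equalities $d_G(v)=n/2$ that every edge between the two parts is present, rather than merely that the degrees match.
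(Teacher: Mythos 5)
Your proof is correct and follows essentially the same argument as the paper: both derive $|I|\geq n/2$ from $|I|\geq\delta_G(I)\geq\delta(G)$ and $|I|\leq n/2$ from $d_G(v)\leq n-|I|$ for $v\in I$, then use the forced equalities to conclude that every vertex of $I$ is adjacent to all of $V(G)\setminus I$. No issues.
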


\begin{proof}
    Suppose that $G$ satisfies $\delta(G)\geq \frac{|G|}{2}$ and that there is an independent set $I$ of $G$ with $|I|\geq \delta_G(I)$.
    For each vertex $v\in I$, we have $\frac{|G|}{2}\leq d_G(v)\leq |V(G)\setminus I|=|G|-|I|$, which implies that $|I|\leq \frac{|G|}{2}$.
    On the other hand, we have $|I|\geq \delta_G(I)\geq \delta(G)\geq \frac{|G|}{2}$.
    These two force $|I|=|V(G)\setminus I|=\frac{|G|}{2}$ and each vertex of $I$ is adjacent to all vertices of $G-I$.
    Thus, $G$ contains a balanced complete bipartite graph with the partite sets $I$ and $V(G)\setminus I$.
\end{proof}

Note that the case where $G$ contains the complete bipartite graph $K_{|G|/2,|G|/2}$ is easy to deal with in terms of the existence of a 2-factor.
Thus, the assumption that $|I|\leq \delta_G(I)-1$ for every independent set $I$ of $G$ is a relaxation of the minimum degree condition $\delta(G)\geq \frac{|G|}{2}$.

\begin{proposition}\label{prop:sigma1 ce relation}
    If a graph $G$ satisfies $\alpha(G)\leq \kappa(G)$, then $|I|\leq \delta_G(I)$ for every independent set $I$ of $G$.
\end{proposition}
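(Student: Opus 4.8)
The plan is to establish the desired bound through a short chain of inequalities, linking the quantity $|I|$ to $\delta_G(I)$ via the independence number, the connectivity, and the minimum degree of $G$. The key observation is that the hypothesis $\alpha(G)\leq\kappa(G)$ is a statement comparing two global parameters of $G$, whereas the conclusion is a statement about an arbitrary independent set $I$; so the natural strategy is to bound $|I|$ from above by the global parameter $\alpha(G)$ and to bound $\delta_G(I)$ from below by the global parameter $\delta(G)$, and then to interpose $\kappa(G)$ between them.

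I would fix an arbitrary independent set $I\in\mathcal{I}(G)$ and argue as follows. First, by the very definition of the independence number, $|I|\leq\alpha(G)$. Next, the Chv\'{a}tal-Erd\H{o}s hypothesis gives $\alpha(G)\leq\kappa(G)$. The only non-immediate ingredient is the classical inequality $\kappa(G)\leq\delta(G)$, valid for every graph (for a non-complete graph, deleting the neighborhood of a minimum-degree vertex separates it from the rest, and for a complete graph both sides coincide). Finally, since $\delta_G(I)$ is a minimum taken over the subset $I\subseteq V(G)$ while $\delta(G)$ is the minimum over all of $V(G)$, we trivially have $\delta(G)\leq\delta_G(I)$. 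Concatenating these four steps yields
\[
|I|\;\leq\;\alpha(G)\;\leq\;\kappa(G)\;\leq\;\delta(G)\;\leq\;\delta_G(I),
\]
which is exactly the claimed bound, and since $I$ was arbitrary the proposition follows.

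I do not expect any genuine obstacle here: each of the four inequalities is either a definition, the hypothesis, or a standard textbook fact. The one point worth a moment's care is the step $\kappa(G)\leq\delta(G)$, which is the sole place where a short justification is needed rather than an appeal to a definition; everything else is immediate. Degenerate cases (the empty independent set, or singletons) are subsumed by the same chain and require no separate treatment.
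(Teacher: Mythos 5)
Your proposal is correct and follows exactly the same chain of inequalities as the paper's proof, namely $|I|\leq\alpha(G)\leq\kappa(G)\leq\delta(G)\leq\delta_G(I)$, with the classical fact $\kappa(G)\leq\delta(G)$ as the only non-definitional step. Nothing is missing.
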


\begin{proof}
    Suppose that $G$ satisfies $\alpha(G)\leq \kappa(G)$.
    Let $I$ be an arbitrary independent set of $G$.
    Since $\delta(G)\geq \kappa(G)$, we have $|I|\leq \alpha(G)\leq \kappa(G)\leq \delta(G)\leq \delta_G(I)$.
\end{proof}

Thus, the assumption that $|I|\leq \delta_G(I)$ for every independent set $I$ of $G$ is a relaxation of the Chv\'{a}tal-Erd\H{o}s condition.

\subsection{2-factors with bounded number of cycles}

By Theorem~\ref{thm:sigma0}, if every independent set $I$ of a graph $G$ satisfies $|I|\leq \delta_G(I)-1$, then $G$ has a 2-factor.
However, the number of cycles in a 2-factor cannot be bounded by the assumption since, for any positive integer $\ell$, the union of $\ell$ disjoint copies of the complete graph $K_{\ell+2}$ satisfies the assumption of Theorem~\ref{thm:sigma0}.
Then, what is a reasonable additional condition for $G$ with the assumption of Theorem~\ref{thm:sigma0} to have a 2-factor with at most $k$ cycles?
By Ore's theorem in \cite{O1960}, if $G$ satisfies $\sigma_2(G)\geq |G|$, then $G$ has a 2-factor with one cycle. 
Motivated by the result,  we pose the following conjecture.

\begin{conjecture}\label{conj:sigma0 k comp}
    Let $G$ be a graph of order $n$ and let $k$ be a positive integer.
    If $\sigma_{k+1}(G)\geq n$ and every independent set $I$ of $G$ satisfies $|I|\leq \delta_G(I)-1$, then $G$ has a 2-factor with at most $k$ cycles.
\end{conjecture}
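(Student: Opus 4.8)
The plan is to argue by contradiction through an extremal $2$-factor and a cycle-merging operation, combining the degree-sum hypothesis $\sigma_{k+1}(G)\ge n$ with the local degree hypothesis to produce a forbidden independent set. By Theorem~\ref{thm:sigma0}, the hypothesis $|I|\le\delta_G(I)-1$ already guarantees that $G$ has a $2$-factor, so the real content is the bound on the number of cycles. First I would fix a $2$-factor $F=C_1\cup\cdots\cup C_c$ with $c$ minimum, and, among all such, refine the choice by a secondary extremal condition (for instance, lexicographically minimizing the multiset of cycle lengths, or minimizing the length of a shortest cycle) so that local rotation and reinsertion arguments become available. I assume for contradiction that $c\ge k+1$.

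The engine is the standard pair of merging moves: given cycles $C_a,C_b$ oriented arbitrarily, with $u,u^+$ consecutive on $C_a$ and $y,y^+$ consecutive on $C_b$, if $u\sim y$ and $u^+\sim y^+$ (parallel move), or $u\sim y^+$ and $u^+\sim y$ (crossing move), then deleting $uu^+$ and $yy^+$ and adding the two new edges fuses $C_a$ and $C_b$ into a single cycle on $V(C_a)\cup V(C_b)$, lowering the cycle count and contradicting minimality. Hence no merging move is available. The key consequence I would extract is the \emph{successor inequality}: for every cycle $C_b$ and every vertex $u$ on another cycle $C_a$, one has $d_{C_b}(u)+d_{C_b}(u^+)\le |C_b|$, because the parallel move being blocked forces $u^+$ to avoid the successors of $N_{C_b}(u)$, a set of size $d_{C_b}(u)$. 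Iterating and symmetrizing these blocked moves should yield, along the lines of Bondy's Hamiltonicity argument, a family of attachment successors that are pairwise non-adjacent.

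The crux is then to assemble, from these non-adjacencies, an independent set $I$ of size exactly $k+1$ whose degree sum is strictly below $n$, contradicting $\sigma_{k+1}(G)\ge n$. Concretely I would select $k+1$ of the cycles (for example the shortest ones) and choose one representative successor from each so that the representatives are mutually non-adjacent; bounding each representative's degree inside its own cycle by its cycle length minus one and its cross-cycle degrees by the successor inequality, the total should telescope to at most $\sum_i |C_i|\le n$, with a strict loss coming from the blocked moves. Here the local hypothesis $|I|\le\delta_G(I)-1$ enters in two ways: it supplies the minimum-degree strength needed to run the rotation and reinsertion steps and to rule out degenerate short cycles (in particular the $K_2$-cycles around which the exceptional family $\mathcal{G}$ of Theorem~\ref{thm:sigma1} lives), and it certifies that any transversal we build is genuinely independent and of the required size.

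I expect the main obstacle to be exactly this simultaneous control: unlike Bondy's theorem, where all attachment successors lie on a single longest cycle and independence follows from one rotation argument, here the representatives are spread over $k+1$ distinct cycles, so proving that a transversal can be chosen independent while keeping the degree sum strictly under $n$ requires a careful global accounting of the cross-cycle edges together with a case analysis for short cycles. A secondary difficulty is pinning down where the secondary extremal choice of $F$ is actually used, and confirming tightness against $\bigcup_{\ell}\mathcal{G}_\ell$ and against disjoint copies of $K_{\ell+2}$, which together indicate that neither hypothesis can simply be dropped.
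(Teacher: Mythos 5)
You are attempting to prove a statement that the paper itself leaves open: this is Conjecture~\ref{conj:sigma0 k comp}, not a theorem, and the paper proves it only under the additional structural hypothesis that $G$ has a cut set $S$ with $\omega(G-S)\geq |S|+k-1$ (Theorem~\ref{thm:sigma0 k comp}, via a component-by-component analysis and an auxiliary bipartite graph), plus the case $k=2$ (Corollary~\ref{cor:sigma0 2 comp}, by splitting on whether $G$ is 1-tough and invoking the dominating-cycle theorem of Bauer et al.). Your text is a plan rather than a proof: the decisive step is repeatedly deferred (``should yield'', ``I would select \dots so that the representatives are mutually non-adjacent'', ``I expect the main obstacle to be exactly this''), and that deferred step is precisely the open content of the conjecture.

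Concretely, there are two gaps in the proposed route. First, with a minimum 2-factor $F=C_1\cup\cdots\cup C_c$ and $c\geq k+1$, an adjacency between two representatives $x_i\in V(C_i)$ and $x_j\in V(C_j)$ does \emph{not} by itself enable a merging move --- fusing two cycles requires two suitably placed independent edges between them --- so minimality of $c$ gives you no contradiction when a transversal fails to be independent, and hence no guarantee that an independent set of size exactly $k+1$ (the only size to which $\sigma_{k+1}(G)\geq n$ applies) can be extracted at all. Second, even granting an independent transversal, the bound $d_G(x_i)\leq (|C_i|-1)+\sum_{j\neq i}d_{C_j}(x_i)$ does not telescope below $n=\sum_i|C_i|$: the successor inequality $d_{C_b}(u)+d_{C_b}(u^+)\leq |C_b|$ controls a \emph{pair} of vertices on the same cycle against a second cycle, whereas your independent set takes only one vertex per cycle, so there is nothing to pair and no slack to harvest. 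The paper's own evidence for the delicacy here is the extremal graph $G_k$ built from $K_{k+3}$ and $k$ copies of $K_{k+2}$, where $\sigma_{k+1}(G_k)=n-1$ and every 2-factor needs $k+1$ cycles: the witnessing independent set there is found through the cut structure, which is exactly the extra hypothesis under which the paper can complete the argument. Your proposal correctly identifies the difficulty but does not resolve it, so it cannot be accepted as a proof of the conjecture.
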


If Conjecture~\ref{conj:sigma0 k comp} is true, then the lower bound of $\sigma_{k+1}(G)$ is best possible.
Indeed, for a positive integer $k$, let $H_0$ be a complete graph of order $k+3$ with vertices $\{v_1,v_2,\dots , v_{k+3}\}$ and let $H_1, H_2, \dots , H_k$ be $k$ disjoint copies of the complete graph $K_{k+2}$.
The graph $G_k$ is obtained from $H_0, H_1, \dots , H_k$ by joining all the vertices of $H_i$ and the vertex $v_i\in V(H_0)$ for each $i\in [k]$.
Then we have $|G_k|=k+3+k(k+2)=k^2+3k+3$.
Since $\delta(G_k)=k+2$ and $\alpha(G_k)=k+1$, every independent set $I$ of $G_k$ satisfies $|I|\leq \alpha(G_k)=\delta(G_k)-1\leq \delta_{G_k}(I)-1$.
Let $u_i$ be a vertex of $H_i$ for each $i\in [k]$.
Considering an independent set $I=\{v_{k+1},u_1,u_2,\dots ,u_k\}$ of $G_k$, we infer that $\sigma_{k+1}(G_k)=(k+1)\delta(G_k)=(k+1)(k+2)=n-1$.
On the other hand, since each of $\{v_1, v_2, \dots , v_k\}$ is a cut vertex of $G_k$ and $G-\{v_1,v_2,\dots , v_k\}$ has $k+1$ components, every 2-factor of $G_k$ consists of at least $k+1$ cycles.

If a graph $G$ contains a cut set $S\subseteq V(G)$ such that $\omega(G-S)\geq |S|+k-1$ where $\omega(G-S)$ is the number of components of $G-S$, then obviously every 2-factor of $G$ consists of at least $k$ cycles.
On the other hand, we can show that Conjecture~\ref{conj:sigma0 k comp} holds for graphs with such cut sets.

\begin{theorem}\label{thm:sigma0 k comp}
    Let $G$ be a graph of order $n$ and let $k$ be a positive integer.
    Suppose that $\sigma_{k+1}(G)\geq n$ and that every independent set $I$ of $G$ satisfies $|I|\leq \delta_G(I)-1$.
    If there is a cut set $S$ of $G$ with $\omega(G-S)\geq |S|+k-1$, then $G$ has a 2-factor with at most $k$ cycles.
\end{theorem}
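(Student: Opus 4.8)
The lower-bound half of the desired conclusion is already recorded in the remark preceding the statement: the cut set $S$ forces every 2-factor of $G$ to have at least $k$ cycles. So the plan is to produce \emph{some} 2-factor with at most $k$ cycles; by the lower bound it will automatically have exactly $k$. Throughout write $s=|S|$ and let $D_1,\dots,D_m$ be the components of $G-S$, so that $m=\omega(G-S)\geq s+k-1$.

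First I would prove a structural lemma pinning down $m$ exactly. Choosing one vertex from each component gives an independent set of size $m$, and selecting the minimum-degree vertex of each component shows, via the independence hypothesis $|I|\leq\delta_G(I)-1$, that the minimum degree over $V(G)\setminus S$ is at least $m+1$. Since such a vertex sends edges only into its own component and into $S$, this yields $|D_i|\geq m+2-s$ for every $i$. Feeding these two facts into $\sigma_{k+1}(G)\geq n$, evaluated on a transversal of the $k+1$ smallest components, forces $m\leq s+k-1$, hence $m=s+k-1$; the same computation is consistent only when $s\leq k+1$, which is harmless. Consequently the 2-factor I aim to build should consist of $k-1$ cycles, each a Hamilton cycle of one component, together with a single cycle that passes through all $s$ vertices of $S$ and threads the remaining $s$ components.

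With $m=s+k-1$ fixed, the construction reduces to three tasks: (i) each component used as a ``solo'' cycle must be Hamiltonian; (ii) each of the $s$ ``threaded'' components must contain a Hamilton path whose endpoints are adjacent to the two cut vertices assigned to it; and (iii) the $s$ cut vertices and the $s$ threaded components must be arranged into one cyclic order in which consecutive objects are joined by an edge. I would treat (iii) as a Hamilton-cycle problem in the auxiliary bipartite graph recording adjacency between $S$ and the threaded components, solvable by a Hall-type/system-of-distinct-representatives argument, exploiting the freedom to choose which $k-1$ of the $s+k-1$ components are solo as slack.

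The hard part will be (i)–(ii): guaranteeing the internal Hamilton cycles and the prescribed-endpoint Hamilton paths. The global hypotheses descend to a component $H=G[D_i]$ only as $\delta(H)\geq k$ together with the weakened bound $|I|\leq\delta_H(I)+s-1$ for independent $I\subseteq V(H)$, which falls short of the hypothesis of Theorem~\ref{thm:sigma0}. The plan to close this gap is to bound each component's order from above — again through $\sigma_{k+1}(G)\geq n$ — so that no component is large enough to escape a Dirac-type or Chv\'{a}tal--Erd\H{o}s-type Hamiltonicity criterion, and to obtain (ii) from Hamilton-connectedness of the dense threaded components. Should a direct Hamiltonicity argument prove too lossy, the fallback is to begin with the 2-factor supplied by Theorem~\ref{thm:sigma0} and repeatedly merge two cycles by an edge exchange whenever more than $k$ cycles remain, with $\sigma_{k+1}(G)\geq n$ supplying the exchange edges and the cut structure guaranteeing that the merging halts precisely at $k$ cycles; isolating exactly when such an exchange is available is where I expect the real difficulty to lie.
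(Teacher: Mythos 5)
Your skeleton coincides with the paper's: the same counting against $\sigma_{k+1}(G)\geq n$, applied to a transversal of $k+1$ components, pins down $\omega(G-S)=|S|+k-1$ and $|S|\leq k+1$, with every component of order at least $k+1$, and the final assembly also runs through an auxiliary bipartite graph between $S$ and the components. However, the two steps you defer are exactly where the work lies, and your stated plans for them do not go through as written. First, bounding the component orders from above so that an internal Ore/Dirac-type criterion applies only works when $|S|\geq 3$: the surplus inequality $\sum_{i\geq k+2}n_i\leq(k+1)(|S|-1)-|S|$ yields $n_i\leq 2k-1$ (hence $\sigma_2(G_i)\geq 2\delta(G-S)\geq 2k\geq n_i+1$ and Hamilton-connectedness) precisely because at least $|S|-2\geq 1$ components lie outside the chosen transversal. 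For $|S|=1$ and $|S|=2$ a component can be arbitrarily large while satisfying only $\delta(G_i)\geq k$, so no internal Hamiltonicity criterion is available; the paper instead supposes $G_1$ is not Hamiltonian (resp.\ not Hamilton-connected), takes a non-adjacent pair realizing $\sigma_2(G_1)\leq n_1-1$ (resp.\ $\leq n_1$), and plays it against $\sigma_{k+1}(G)\geq n$ to force equalities showing that this pair is joined to the cut vertices and that $G[V(G_1)\cup\{v_1\}]$ (resp.\ $G_1$) is Hamiltonian while the remaining components are complete. This equality analysis is a missing idea in your proposal, not a routine verification.

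Second, your task (iii) is not merely a system-of-distinct-representatives problem. An edge of the auxiliary graph only records that \emph{some} vertex of $G_j$ sees $v_i$; to thread a Hamilton path of $G_j$ from $v_i$ to $v_{i+1}$ you need two \emph{distinct} attachment vertices, and Hamilton-connectedness does not help if $N_G(v_i)\cap V(G_j)$ and $N_G(v_{i+1})\cap V(G_j)$ coincide in a single vertex. The paper first obtains a cycle of the auxiliary graph through all of $S$ by showing that at least $|S|$ of the vertices $u_1,\dots,u_{k+1}$ have full degree $|S|$ (so the auxiliary graph contains a $K_{s,s}$), and then needs a separate replacement argument: assuming no admissible component exists for a consecutive pair $(v_i,v_{i+1})$, each candidate component contributes a vertex of degree at most $n_j-2+|S|$, and summing over a suitable $(k+1)$-element independent set contradicts $\sigma_{k+1}(G)\geq n$. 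Without this step the construction can stall even after the bipartite cycle is found. Your fallback of merging cycles in the 2-factor supplied by Theorem~\ref{thm:sigma0} is not developed enough to assess.
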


When we set $k=2$, the assumption that there is a cut set $S$ of $G$ with $\omega(G-S)\geq |S|+k-1=|S|+1$ is equivalent to the assumption that $G$ is not 1-tough.
Bauer et al.~\cite{BVMS1989} showed the following result on a degree sum condition for a 1-tough graph to have a dominating cycle.
A cycle $C$ of a graph $G$ is called \emph{dominating} if $G-V(C)$ has no edge.

\begin{theorem}[\cite{BVMS1989}]\label{thm:1 tough}
    Let $G$ be a graph of order $n$.
    If $G$ is 1-tough and $\sigma_3(G)\geq n$, then every longest cycle of $G$ is a dominating cycle.
\end{theorem}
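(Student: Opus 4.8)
The plan is to argue by contradiction, combining the classical machinery of longest-cycle ``insertion'' with a degree-sum count against the hypothesis $\sigma_3(G)\ge n$. First I would record that $1$-toughness forces $\kappa(G)\ge 2$: a cut vertex $v$ would give $\omega(G-v)\ge 2>1=|\{v\}|$, contradicting toughness. Now suppose some longest cycle $C$ is not dominating, and fix a cyclic orientation of $C$, writing $v^{+}$ for the successor and $v^{-}$ for the predecessor of $v\in V(C)$, and $S^{+}=\{v^{+}:v\in S\}$. Since $C$ is not dominating, $G-V(C)$ has a component $F$ with $|V(F)|\ge 2$; let $A=N_C(F)$ be its attachment set on $C$, so $|A|\ge 2$ by $2$-connectivity.

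The heart of the argument is a short list of insertion lemmas, each proved by exhibiting a cycle longer than $C$, a contradiction. (i) No two consecutive vertices of $C$ both lie in $A$, i.e.\ $A\cap A^{+}=\varnothing$; otherwise a path of $F$ joining the two attachments could be inserted across the edge between them. (ii) $A^{+}$ is independent, and no vertex of $A^{+}$ has a neighbour in $F$; the independence is the standard crossover argument, rerouting $C$ through a path of $F$ between two attachments $a,b\in A$ while replacing the two edges $aa^{+}$ and $bb^{+}$ by the chord $a^{+}b^{+}$, which yields a cycle covering all of $C$ plus some vertices of $F$. Granting these, for any $x\in V(F)$ and any two distinct $a^{+},b^{+}\in A^{+}$ the triple $\{x,a^{+},b^{+}\}$ is independent. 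Observe that this already shows $\alpha(G)\ge 3$, so the case $\alpha(G)\le 2$ needs no separate treatment: there a longest cycle simply cannot fail to be dominating.

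It remains to contradict $\sigma_3(G)\ge n$ by showing that $x,a^{+},b^{+}$ can be chosen with $d(x)+d(a^{+})+d(b^{+})\le n-1$. The plan is to build an injection from the disjoint union $N(x)\sqcup N(a^{+})\sqcup N(b^{+})$ into $V(G)$ that misses at least one vertex; injectivity then gives the degree sum as the image size, hence at most $n-1$. A vertex that is a common neighbour of two of the triple is sent to distinct images by combining the identity with the predecessor shift $c\mapsto c^{-}$ along $C$; the insertion lemmas of the previous paragraph guarantee that these shifted images are genuine non-neighbours and do not collide, while $1$-toughness, via $\omega(G-A)\le|A|$ applied to the cut $A$, controls the number of arcs of $C-A$ and of components outside $F$ so that the shift stays injective and at least one vertex is left unused. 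I expect this last step, namely the bookkeeping of the injection across the gaps of $A$ while keeping it non-surjective, to be the main obstacle: the insertion lemmas supply the local non-adjacencies easily, but assembling them into a single global injective map with slack is precisely where the toughness hypothesis must be used with care, and where degenerate configurations such as $F\cong K_2$ or attachments shared by several vertices of $F$ have to be checked individually.
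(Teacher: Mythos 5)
The paper does not actually prove Theorem~\ref{thm:1 tough}: it is quoted from Bauer, Veldman, Morgana and Schmeichel \cite{BVMS1989} and used as a black box, so there is no in-paper proof to compare yours against; your proposal has to be judged on its own.

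The preparatory part of your plan is sound and standard: $1$-toughness gives $\kappa(G)\ge 2$, the non-dominated component $F$ and its attachment set $A=N_C(F)$ exist, and the three insertion lemmas ($A\cap A^{+}=\emptyset$, $A^{+}$ independent, no vertex of $A^{+}$ adjacent to $F$) are correct and yield the independent triple $\{x,a^{+},b^{+}\}$. The genuine gap is the step you yourself flag as ``the main obstacle'': the estimate $d(x)+d(a^{+})+d(b^{+})\le n-1$ is announced but never established, and it cannot be dismissed as bookkeeping, because everything you actually prove uses only $2$-connectivity, and $2$-connectivity together with $\sigma_3(G)\ge n$ is \emph{not} sufficient for the conclusion. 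Concretely, take three disjoint triangles and join every vertex of them to two additional universal vertices: this graph has $n=11$, is $2$-connected, satisfies all your insertion lemmas, and has $\sigma_3=12\ge n$, yet a longest cycle covers the two hubs and only two of the triangles, leaving a triangle (hence edges) uncovered. So the entire content of the theorem lives in the omitted counting argument, where $1$-toughness must be converted into the extra ``missed'' vertices; the classical crossover count at this level of generality only gives $d(x)+d(a^{+})+d(b^{+})\le n+1$, and closing the gap to $n-1$ in \cite{BVMS1989} requires a delicate choice of $x\in V(F)$ and of the pair $a,b\in A$ together with a case analysis of common neighbours on and off $C$, not a direct application of $\omega(G-A)\le|A|$ as you suggest. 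As written, the proposal is an accurate road map of the known strategy, but not a proof.
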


Combining Theorems~\ref{thm:sigma0 k comp} and \ref{thm:1 tough}, we have the following corollary, which is a partial result of Conjecture~\ref{conj:sigma0 k comp}.

\begin{corollary}\label{cor:sigma0 2 comp}
    Let $G$ be a graph of order $n$.
    If $\sigma_3(G)\geq n$ and every independent set $I$ of $G$ satisfies $|I|\leq \delta_G(I)-1$, then $G$ has a 2-factor with at most two cycles.
\end{corollary}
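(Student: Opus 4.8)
The plan is to split into two cases according to whether $G$ is 1-tough, which exactly matches the two results we are allowed to invoke. As the paper already observes for $k=2$, the hypothesis of Theorem~\ref{thm:sigma0 k comp} that there is a cut set $S$ with $\omega(G-S)\geq |S|+1$ is precisely the failure of 1-toughness. Hence if $G$ is \emph{not} 1-tough, then such an $S$ exists, and since our assumptions $\sigma_3(G)\geq n$ (that is, $\sigma_{k+1}(G)\geq n$ with $k=2$) and $|I|\leq \delta_G(I)-1$ are exactly the remaining hypotheses of Theorem~\ref{thm:sigma0 k comp}, that theorem applied with $k=2$ immediately produces a 2-factor with at most two cycles, and we are done.

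It remains to treat the case where $G$ is 1-tough. Here I would invoke Theorem~\ref{thm:1 tough}: since $G$ is 1-tough and $\sigma_3(G)\geq n$, every longest cycle of $G$ is a dominating cycle. Note that 1-toughness forces $G$ to be 2-connected, so a longest cycle $C$ exists and has length at least three. I claim that in fact $C$ is a Hamilton cycle, which yields a 2-factor with a single cycle and so certainly at most two cycles. Thus the entire 1-tough case reduces to proving Hamiltonicity, where the degree hypothesis does the real work.

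To prove the claim I would argue by contradiction. Suppose $V(C)\neq V(G)$ and choose $v\in V(G)\setminus V(C)$. Because $C$ is dominating, $G-V(C)$ has no edge, so every neighbour of $v$ lies on $C$ and the number of neighbours of $v$ equals $d_G(v)$. Orient $C$ and let $N^{+}(v)$ denote the set of successors along $C$ of the neighbours of $v$ on $C$, so that $|N^{+}(v)|=d_G(v)$ and $v\notin N^{+}(v)$. The standard longest-cycle arguments then supply two facts. First, $v$ is adjacent to no two consecutive vertices of $C$, for otherwise $v$ could be inserted between them to lengthen $C$; in particular $v$ has no neighbour in $N^{+}(v)$. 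Second, $N^{+}(v)$ is an independent set, for otherwise an edge joining two such successors yields, by the usual rerouting of $C$ through $v$, a cycle on $V(C)\cup\{v\}$ longer than $C$. Combining these, $\{v\}\cup N^{+}(v)$ is an independent set of order $d_G(v)+1$.

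Now applying the hypothesis $|I|\leq \delta_G(I)-1$ to this independent set gives
\[
d_G(v)+1=|\{v\}\cup N^{+}(v)|\leq \delta_G(\{v\}\cup N^{+}(v))-1\leq d_G(v)-1,
\]
where the final inequality uses $v\in\{v\}\cup N^{+}(v)$ and hence $\delta_G(\{v\}\cup N^{+}(v))\leq d_G(v)$. This is absurd, so no such $v$ exists and $C$ spans $G$, completing the 1-tough case. I expect the only delicate point to be the careful verification of the two longest-cycle facts, namely the non-insertability of $v$ and the independence of $N^{+}(v)$, including checking that the relevant successors are non-consecutive on $C$ so that the rerouting is legitimate; these are routine but should be written out precisely. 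Everything else is the direct application of Theorems~\ref{thm:sigma0 k comp} and~\ref{thm:1 tough}.
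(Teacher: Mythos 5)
Your proposal is correct and follows the same decomposition as the paper: split on whether $G$ is 1-tough, handle the non-1-tough case by Theorem~\ref{thm:sigma0 k comp} with $k=2$, and in the 1-tough case upgrade the dominating cycle from Theorem~\ref{thm:1 tough} to a Hamilton cycle. The only difference is that in the 1-tough case the paper simply applies Lemma~\ref{lem:1 vertex inclusion} repeatedly to the vertices off the dominating cycle (each has no neighbour outside the cycle, so the lemma absorbs them one by one), whereas you reprove in effect the $k=1$ case of that lemma from scratch via the standard longest-cycle rotation argument on $\{v\}\cup N^{+}(v)$; your version works, but citing the lemma would spare you the ``delicate'' verification you flag at the end.
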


The paper is organized as follows:
We give proofs of Theorems~\ref{thm:sigma0} and \ref{thm:sigma1} in Section~\ref{sec:existence}, and proofs of Theorem~\ref{thm:sigma0 k comp} and Corollary~\ref{cor:sigma0 2 comp} in Section~\ref{sec:k components}.

Before we go to the proofs, we introduce the notation used in our proof.
For a cycle $C$ of a graph $G$, we set one cyclic orientation to $C$, and let $\overrightarrow{C}$ denote the cycle $C$ including its orientation.
For a cycle $\overrightarrow{C}$ of $G$ and a vertex $v\in V(C)$, let $v^+$ (resp. $v^-$) denote the vertex that comes after (resp. before) $v$ along $\overrightarrow{C}$.
For a cycle $\overrightarrow{C}$ of $G$ and two distinct vertices $u$ and $v$ of $C$, let $u\overrightarrow{C}v$ (resp. $u\overleftarrow{C}v$) denote the subpath of $C$ from $u$ to $v$ along the orientation (resp. opposite orientation) of $\overrightarrow{C}$.

\section{Existence of a 2-factor}\label{sec:existence}

In this section, we give proofs of Theorems~\ref{thm:sigma0} and \ref{thm:sigma1}.

\subsection{Proof of Theorem~\ref{thm:sigma0}}

The following lemma is the essential part of our proof of Theorem~\ref{thm:sigma0}.

\begin{lemma}\label{lem:1 vertex inclusion}
    Let $G$ be a graph such that every independent set $I$ of $G$ satisfies $|I|\leq \delta_G(I)-1$.
    Suppose that $S$ is a set of vertices such that $G-S$ has a 2-factor with $k$ cycles.
    If there is a vertex $v\in S$ such that $|N_G(v)\cap S|\leq 1$, then $G-S\setminus \{v\}$ has a 2-factor with at most $k$ cycles.
\end{lemma}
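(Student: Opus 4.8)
The plan is to fix a 2-factor $F$ of $G-S$ with $k$ cycles, assign each of its cycles a cyclic orientation, and then try to absorb $v$ into $F$ so as to obtain a 2-factor of $G-(S\setminus\{v\})$ with at most $k$ cycles. The neighbors of $v$ available for this are precisely those lying in $G-S$, so I set $N:=N_G(v)\cap V(G-S)$; the hypothesis $|N_G(v)\cap S|\le 1$ then gives $|N|\ge d_G(v)-1$. Since $x\mapsto x^{+}$ is a bijection on $V(F)$, the set $N^{+}:=\{x^{+}\mid x\in N\}$ has $|N^{+}|=|N|\ge d_G(v)-1$, and the whole argument splits according to how $N$ and $N^{+}$ sit on the cycles of $F$. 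First I would attempt the direct insertion: if some $x\in N$ has $x^{+}\in N$, then $v$ is adjacent to the two consecutive vertices $x,x^{+}$ of a cycle $C$, so replacing the edge $xx^{+}$ by the path $xvx^{+}$ enlarges $C$ through $v$ and leaves the remaining cycles untouched, yielding a 2-factor of $G-(S\setminus\{v\})$ with exactly $k$ cycles.

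If direct insertion is impossible, I would search for a crossing pair: distinct $x,y\in N$ with $x^{+}y^{+}\in E(G)$. When $x$ and $y$ lie on a common cycle $C$, orienting $C$ so that it meets $x$ before $y$, the closed walk $v\,x\overleftarrow{C}y^{+}\,x^{+}\overrightarrow{C}y\,v$ uses the edges $vx$, $y^{+}x^{+}$, $yv$ together with the two complementary arcs $x\overleftarrow{C}y^{+}$ and $x^{+}\overrightarrow{C}y$ of $C$, hence passes through $v$ and every vertex of $C$ exactly once; this keeps the count at $k$ cycles. When $x$ and $y$ lie on different cycles $C_{1}$ and $C_{2}$, the analogous walk $v\,x\overleftarrow{C_{1}}x^{+}\,y^{+}\overrightarrow{C_{2}}y\,v$ stitches $C_{1}$ and $C_{2}$ into a single cycle through $v$ via the edges $vx$, $x^{+}y^{+}$, $yv$, giving $k-1$ cycles. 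In both constructions the number of cycles is at most $k$.

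The heart of the lemma is ruling out the remaining possibility, that neither a direct insertion nor a crossing pair exists. If no $x\in N$ satisfies $x^{+}\in N$, then $N\cap N^{+}=\varnothing$; since every neighbor of $v$ inside $V(G-S)$ belongs to $N$, this means no vertex of $N^{+}$ is adjacent to $v$. If moreover there is no crossing pair, then $N^{+}$ is independent. Because $v\notin V(G-S)\supseteq N^{+}$, the set $\{v\}\cup N^{+}$ is then independent of order $1+|N^{+}|\ge d_G(v)$. As $v\in\{v\}\cup N^{+}$, we have $\delta_G(\{v\}\cup N^{+})\le d_G(v)$, so the hypothesis applied to $I=\{v\}\cup N^{+}$ forces $|I|\le \delta_G(I)-1\le d_G(v)-1$, contradicting $|I|\ge d_G(v)$. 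Hence this case cannot occur and one of the two constructive cases always applies, completing the plan.

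The main obstacle I anticipate is the bookkeeping in the rerouting step: verifying that each rerouted closed walk is a genuine cycle with no repeated vertex and correctly tracking whether the cycle count stays at $k$ or drops to $k-1$. Degenerate configurations such as $y^{+}=x$ are exactly the ones already handled by direct insertion, so by carrying out the crossing argument only under $N\cap N^{+}=\varnothing$ these complications are avoided and the two arcs used in each construction genuinely partition the vertex set of the cycle(s) involved.
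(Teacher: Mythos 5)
Your proposal is correct and follows essentially the same route as the paper: both form the successor set $N^{+}$ of the neighbours of $v$ outside $S$, use the hypothesis on the independent set $\{v\}\cup N^{+}$ to force either a neighbour of $v$ in $N^{+}$ (direct insertion) or an edge inside $N^{+}$ (rerouting within one cycle, or merging two cycles), exactly as in the paper's three cases. Your explicit remark that degenerate configurations such as $y^{+}=x$ are subsumed by the direct-insertion case is a small but welcome clarification that the paper leaves implicit.
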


\begin{proof}
    Suppose that $G-S$ has a 2-factor with $k$ disjoint cycles $C_1, C_2, \dots , C_k$.
    We fix one orientation for each cycle $C_i$, and let $\overrightarrow{C_i}$ denote the cycle including its orientation.
    Let $X=N_G(v)\setminus S$.
    For each $x\in X$, let $x^+$ be the succeeding vertex of $x$ along the cycle in $\{C_1,C_2,\dots , C_k\}$ that contains $x$, and let $X^+=\{x^+\mid x\in X\}$.
    If $X^+\cup \{v\}$ is an independent set of $G$, then $|X^+\cup \{v\}|=|X^+|+1=|N_G(v)\setminus S|+1\geq d_G(v)-1+1\geq \delta_G(X^+\cup \{v\})$, a contradiction by the assumption of the lemma.
    Thus, $X^+\cup \{v\}$ is not an independent set, and hence either $vx^+\in E(G)$ for some $x^+\in X^+$ or $x_1^+x_2^+\in E(G)$ for some $x_1^+,x_2^+\in X^+$.
    
    Suppose that $vx^+\in E(G)$ for some $x^+\in X^+$.
    Without loss of generality, we may assume that $x^+\in V(C_1)$.
    By setting $C_1'=x^+\overrightarrow{C_1}xvx^+$, $C_1'\cup \bigcup_{i=2}^k C_k$ is a 2-factor of $G-S\setminus\{v\}$ with $k$ cycles, as desired.

    Next, we assume that $x_1^+x_2^+\in E(G)$ for some $x_1^+,x_2^+\in X^+$.
    If $x_1^+$ and $x_2^+$ lie in the same cycle, say $C_1$, then we set $C_1'=x_1^+\overrightarrow{C_1}x_2vx_1\overrightarrow{C_1}x_2^+x_1^+$ to conclude that $C_1'\cup \bigcup_{i=2}^k C_k$ is a 2-factor of $G-S\setminus\{v\}$ with $k$ cycles, as desired.
    Otherwise, without loss of generality, we may assume that $x_1^+\in V(C_1)$ and $x_2^+\in V(C_2)$.
    Let $C'=x_1^+\overrightarrow{C_1}x_1vx_2\overleftarrow{C_2}x_2^+x_1^+$.
    Then $C'\cup \bigcup_{i=3}^k C_k$ is a 2-factor of $G-S\setminus\{v\}$ with $k-1$ cycles, as desired.
    This completes the proof of Lemma~\ref{lem:1 vertex inclusion}.
\end{proof}

Now we prove Theorem~\ref{thm:sigma0} by using Lemma~\ref{lem:1 vertex inclusion}.

\begin{proof}[Proof of Theorem~\ref{thm:sigma0}]
    Let $G$ be a graph such that every independent set $I$ of $G$ satisfies $|I|\leq \delta_G(I)-1$.
    Suppose that $G$ does not have a 2-factor.
    If there is a vertex $v\in V(G)$ such that $d_G(v)\leq 1$, then the independent set $I=\{v\}$ satisfies $|I|=1\geq d_G(v)=\delta_G(I)$, a contradiction.
    Hence, we infer that $\delta(G)\geq 2$, and in particular, $G$ contains a cycle.
    Let $G'$ be a maximum subgraph of $G$ having a 2-factor, and let $S=V(G)\setminus V(G')$.
    By the maximality of $G'$, $G[S]$ has no cycle, and hence there is a vertex $v\in S$ such that $|N_G(v)\cap S|\leq 1$.
    Thus, by Lemma~\ref{lem:1 vertex inclusion}, $G-S\setminus\{v\}= G[V(G')\cup \{v\}]$ has a 2-factor, which contradicts the maximality of $G'$.
\end{proof}

\subsection{Tutte's 2-factor theorem}

In our proof of Theorem~\ref{thm:sigma1}, we use a well-known result by Tutte~\cite{T1952} which gives a criterion for a graph to have a 2-factor.

For a graph $G$ and disjoint sets of vertices $S\subseteq V(G)$ and $T\subseteq V(G)$, let $e(S,T)$ denote the number of edges joining a vertex of $S$ and a vertex of $T$.
In particular, we abbreviate $e(\{v\},S)$ to $e(v,S)$ for a vertex $v\in V(G)\setminus S$.
For a graph $G$ and disjoint sets of vertices $S\subseteq V(G)$ and $T\subseteq V(G)$, an \emph{odd component} (resp. \emph{even component}) of $(G;S,T)$ is a component $D$ of $G-S\cup T$ such that $e(T,V(D))$ is odd (resp. even).
Let $\mathcal{H}_G(S,T)$ denote the family of odd components of $(G;S,T)$, and let $h_G(S,T)=|\mathcal{H}_G(S,T)|$.
Tutte~\cite{T1952} proved the following theorem.

\begin{theorem}[\cite{T1952}]\label{thm:tutte}
    A graph $G$ has a 2-factor if and only if 
    \[\delta(S,T):=2|S|-2|T|+\sum_{v\in T}d_{G-S}(v)-h_G(S,T)\geq 0\]
    for every pair $(S,T)$ of disjoint sets of vertices of $G$.
\end{theorem}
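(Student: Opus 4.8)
The plan is to recognize the statement as Tutte's $f$-factor theorem for the constant target $f\equiv 2$ and to derive it from the more elementary $1$-factor theorem of Tutte: a graph $H$ has a perfect matching if and only if $o(H-U)\le|U|$ for every $U\subseteq V(H)$, where $o(\cdot)$ counts the components of odd order. Concretely I would (i) build an auxiliary graph $H$ whose perfect matchings are in bijection with the $2$-factors of $G$, (ii) apply the $1$-factor theorem to $H$, and (iii) translate its matching-barrier condition on $H$ back into the inequality $\delta(S,T)\ge 0$ on $G$. Since each equivalence runs in both directions, both implications of the theorem fall out together and no separate counting argument for necessity is needed.

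For step (i) I would use Tutte's gadget. We may assume $\delta(G)\ge 2$: a vertex $v$ with $d_G(v)\le 1$ makes $(S,T)=(\varnothing,\{v\})$ violate the inequality while $G$ has no $2$-factor, so both sides of the equivalence fail. Now form $H$ as follows. For every edge $e=uv$ of $G$ introduce two \emph{link vertices} $e_u,e_v$ joined by an edge; and for every vertex $v$ introduce a set $B_v$ of $d_G(v)-2$ \emph{core vertices}, each joined to all of $A_v:=\{\,e_v\mid e\ni v\,\}$, so that $A_v\cup B_v$ spans a complete bipartite graph $K_{d_G(v),\,d_G(v)-2}$. In a perfect matching of $H$ the $d_G(v)-2$ vertices of $B_v$ are forced into $A_v$, leaving exactly two vertices of $A_v$ to be covered by their link edges; declaring $e\in E(G)$ \emph{chosen} exactly when its link edge lies in the matching then yields a spanning subgraph in which every vertex has degree $2$, and the correspondence is reversible.

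For step (iii) I would fix the explicit dictionary $U=\bigcup_{v\in S}A_v\cup\bigcup_{v\in T}B_v$. A direct count then gives $o(H-U)=|U|-\delta(S,T)$, because the odd components of $H-U$ are exactly the $\sum_{v\in S}(d_G(v)-2)$ isolated core vertices stranded over $S$, the $e(S,T)$ link vertices stranded by the edges between $S$ and $T$, and the $h_G(S,T)$ components obtained from the odd components of $(G;S,T)$, this last count being where the parity definition of $h_G(S,T)$ enters. Hence any pair with $\delta(S,T)<0$ exhibits a Tutte barrier in $H$. The reverse direction is the crux and the main obstacle: given an arbitrary $U$ with $o(H-U)>|U|$, one must show that it can be replaced, without decreasing $o(H-U)-|U|$, by a set of the special form above---that is, by a set which inside each gadget deletes either all of $A_v$, or all of $B_v$, or nothing. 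This normalization is a sequence of local exchange arguments on the complete bipartite gadgets and along the link edges, and reading off the resulting pair $(S,T)$ then completes the equivalence; essentially all the work of the proof is concentrated here.

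If the normalization turns out to be awkward to present, the fallback is to prove the $f\equiv 2$ case directly by induction in the style of Lov\'{a}sz's deficiency version of the factor theorem, showing that $\max_{S,T}(-\delta(S,T))$ equals the number of vertices missed by an optimal near-$2$-factor and driving the induction by deleting or contracting a carefully chosen edge. I nonetheless expect the gadget reduction to be shorter to write, with its sole difficulty the normalization step described above.
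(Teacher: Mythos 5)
First, note that the paper does not prove this statement: it is quoted verbatim from Tutte's 1952 paper (it is the $f$-factor theorem specialized to $f\equiv 2$), so there is no in-paper proof to compare against. Your route --- the gadget reduction to the $1$-factor theorem --- is the standard one and is sound in outline, but as written it has one error and one genuine gap. The error is the claimed identity $o(H-U)=|U|-\delta(S,T)$ for the special sets $U=\bigcup_{v\in S}A_v\cup\bigcup_{v\in T}B_v$. When a vertex $v\notin S\cup T$ has $d_G(v)=2$, the set $B_v$ is empty and the gadget $A_v\cup B_v$ has no internal edges, so the portion of $H-U$ sitting over a component $D$ of $G-S\cup T$ need not be connected and the exact count fails (e.g.\ a cycle component of $G-S\cup T$ all of whose vertices have degree $2$ in $G$ splits into many $K_2$'s). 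What survives is the inequality $o(H-U)\geq \sum_{v\in S}(d_G(v)-2)+e(S,T)+h_G(S,T)=|U|-\delta(S,T)$, because the vertices of $H-U$ over a fixed $D$ number $\sum_{v\in V(D)}(2d_G(v)-2)+e(T,V(D))\equiv e(T,V(D)) \pmod 2$, so at least one odd component arises whenever $e(T,V(D))$ is odd. That inequality is all the necessity direction needs, so this is repairable, but you should state it as an inequality and justify it by the parity of the total vertex count over each $D$, not as an equality.

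The genuine gap is the sufficiency direction. You correctly identify that one must take an \emph{arbitrary} $U\subseteq V(H)$ with $o(H-U)>|U|$ and normalize it to one of the special form (inside each gadget, $U$ meets $A_v\cup B_v$ in all of $A_v$, all of $B_v$, or nothing), but you only name this step ("a sequence of local exchange arguments") without exhibiting the exchanges or verifying that each one preserves $o(H-U)-|U|$. This normalization is, as you yourself say, where essentially all the work lies: one must handle, case by case, sets $U$ that meet $A_v$ and $B_v$ partially, sets that contain one endpoint of a link edge, and the interaction between adjacent gadgets, and in each case show that enlarging or shrinking $U$ to the canonical form does not decrease the deficiency. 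Without these exchanges the equivalence is not established, so the proposal as it stands proves only the necessity half of the theorem. (Your fallback via the deficiency form of the factor theorem has the same status: it names a viable strategy without carrying out the induction.) If you intend this as a complete proof rather than a reduction to a result you may quote, the normalization must be written out; alternatively, you could simply cite the $f$-factor theorem itself, which is what the paper does.
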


A pair of disjoint sets of vertices $(S,T)$ with $\delta(S,T)<0$ is called a \emph{barrier} of $G$.
As a result of Theorem~\ref{thm:tutte}, if a graph $G$ does not have a 2-factor, then there is a barrier of $G$.
It is known that $\delta(S,T)$ is even for any pair $(S,T)$, and thus $\delta(S,T)\leq -2$ for any barrier of $G$.
A barrier $(S,T)$ of $G$ is called \emph{minimal} if $(S',T')$ is not a barrier of $G$ for any pair of disjoint sets of vertices with $|S'\cup T'|<|S\cup T|$.
The following properties of a minimal barrier often appear in literature. (For example, see Lemma 1 in Aldred et al.~\cite{AEFOS2011}.) 

\begin{lemma}[\cite{AEFOS2011}]\label{lem:minimal barrier}
    Let $G$ be a graph without 2-factor and let $(S,T)$ be a minimal barrier of $G$.
    Then the followings hold.
    \begin{enumerate}
        \item $T$ is an independent set of $G$,
        \item for every even component $D$ of $(G;S,T)$, $e(T,V(D))=0$,
        \item for every odd component $D$ of $(G;S,T)$ and every $v\in T$, $e(v,V(D))\leq 1$, and
        \item $|T|>|S|$.
    \end{enumerate}
\end{lemma}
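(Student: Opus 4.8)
The plan is to prove each of the four properties by a minimality argument of the same shape: assuming a property fails for the minimal barrier $(S,T)$, I delete one carefully chosen vertex from $T$ to obtain a pair $(S,T')$ with $|S\cup T'|<|S\cup T|$ and $\delta(S,T')<0$, contradicting minimality. The two ingredients are a rewriting of the deficiency and a formula describing how it reacts to such a deletion. For the rewriting, let $q$ denote the number of edges of $G[T]$ and let $D$ range over the components of $G-(S\cup T)$. Splitting $\sum_{v\in T}d_{G-S}(v)=2q+\sum_D e(T,V(D))$ according to edges inside $T$ and edges to components, and then separating the components by parity, gives
\[
\delta(S,T)=2\bigl(|S|-|T|\bigr)+2q+\sum_{D\text{ odd}}\bigl(e(T,V(D))-1\bigr)+\sum_{D\text{ even}}e(T,V(D)).
\]
Each summand after $2(|S|-|T|)$ is nonnegative, because $e(T,V(D))\geq 1$ for every odd component $D$. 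Property~4 is then immediate and requires no minimality: if $|T|\leq|S|$, the right-hand side is nonnegative, so $(S,T)$ is not a barrier.

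For the deletion formula, fix $t\in T$ and put $T'=T\setminus\{t\}$. When $t$ leaves $T$ it rejoins the graph $G-(S\cup T')$, where it coalesces with exactly those components of $G-(S\cup T)$ that it meets into a single new component $D_t$, while every component disjoint from $N_G(t)$ keeps its parity. Tracking the three terms of $\delta$ through this change yields
\[
\delta(S,T')=\delta(S,T)+2-d_{G-S}(t)+c_o(t)-\varepsilon_t,
\]
where $c_o(t)$ is the number of odd components met by $t$ and $\varepsilon_t\in\{0,1\}$ equals $1$ exactly when $D_t$ is odd. Writing $d_{G-S}(t)=|N_G(t)\cap T|+\sum_i b_i$, where $b_i\geq 1$ is the number of edges from $t$ to the $i$-th component it meets, every met component contributes at least $1$ to $\sum_i b_i$, so $\sum_i b_i\geq c_o(t)+c_e(t)$ with $c_e(t)$ the number of even components met by $t$. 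Hence $-d_{G-S}(t)+c_o(t)\leq-|N_G(t)\cap T|-c_e(t)$, and therefore
\[
\delta(S,T')\leq\delta(S,T)+2-|N_G(t)\cap T|-c_e(t).
\]

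With this inequality the remaining three properties follow by choosing $t$ appropriately, each time arranging a slack of one that is then removed using the evenness of $\delta$: since $\delta(S,T')$ is even, any bound $\delta(S,T')\leq\delta(S,T)+1\leq-1$ forces $\delta(S,T')\leq-2<0$, so $(S,T')$ is a barrier. For property~1, if $T$ is not independent I take $t$ with a neighbour in $T$, so $|N_G(t)\cap T|\geq 1$. For property~2, if some even component $D$ has $e(T,V(D))>0$ I take $t$ meeting $D$, so $c_e(t)\geq 1$. For property~3, if $e(t,V(D))\geq 2$ for some $t\in T$ and some odd component $D$, the second edge to $D$ raises $\sum_i b_i$ to at least $c_o(t)+c_e(t)+1$, improving the displayed bound by a further $1$. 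In each case $\delta(S,T')\leq\delta(S,T)+1$, and $(S,T')$ is a barrier with $|S\cup T'|<|S\cup T|$, the desired contradiction.

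The step I expect to be the main obstacle is the deletion formula, specifically the verification that $h_G(S,T)-h_G(S,T')=c_o(t)-\varepsilon_t$. One must check that all components met by $t$, odd and even alike, merge with $t$ into the single component $D_t$, that components avoiding $N_G(t)$ are untouched and retain their parity, and then determine the parity of $D_t$. A secondary subtlety is that the degree bound alone only gives $\delta(S,T')\leq\delta(S,T)+1$ rather than $\leq\delta(S,T)$; this one-unit gap is exactly what the parity of the deficiency closes, which is why each case above is set up to lose precisely one.
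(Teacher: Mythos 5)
The paper does not prove this lemma at all; it is quoted from Aldred et al.~\cite{AEFOS2011}, so there is no in-paper argument to compare against. Your proof is correct and self-contained, and it is essentially the standard minimality argument for barrier properties. The two pivotal computations both check out: the rewriting $\delta(S,T)=2(|S|-|T|)+2q+\sum_{D\text{ odd}}(e(T,V(D))-1)+\sum_{D\text{ even}}e(T,V(D))$ follows from $\sum_{v\in T}d_{G-S}(v)=2q+\sum_D e(T,V(D))$ and correctly yields property~4 for \emph{any} barrier (no minimality needed); and the deletion identity $h_G(S,T')=h_G(S,T)-c_o(t)+\varepsilon_t$ is right, since in $G-(S\cup T')$ the vertex $t$ has neighbours only inside the components it meets (its neighbours in $T'$ and $S$ are still deleted), so those components coalesce with $t$ into a single new component while all others keep their edge count to $T'$ and hence their parity. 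The one-unit slack in each of properties~1--3 is legitimately closed by the parity of $\delta$, which the paper itself records as known (and which also follows from your own rewriting, since modulo~2 one has $\delta(S,T)\equiv\sum_{v\in T}d_{G-S}(v)+\sum_D e(T,V(D))\equiv 2q\equiv 0$). In short: a complete and correct proof of a statement the paper only cites.
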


\subsection{Proof of Theorem~\ref{thm:sigma1}}

Suppose that $|I|\leq \delta_G(I)$ for every independent set $I$ of $G$ and that $G$ does not have a 2-factor.
We shall show that $G\in\mathcal{G}$.

Let $\mathcal{I}(G)$ be the family of independent sets of $G$.
By Theorem~\ref{thm:tutte}, $G$ contains a barrier.
Let $(S,T)$ be a minimal barrier of $G$.
Let $t=|T|$ and $T=\{v_1, v_2, \dots , v_t\}$.

Since $T\in \mathcal{I}(G)$ by Lemma~\ref{lem:minimal barrier}, we have $\delta_G(T)\geq |T|=t$, and thus $d_G(v)\geq t$ for every vertex $v\in T$.
Without loss of generality, we may assume that $d_{G-S}(v_1)\leq d_{G-S}(v_i)$ for all $i\in \{2,3,\dots , t\}$.
Let $a:=t-|S|>0$.

Suppose first that $d_{G-S}(v_1)\geq a+1$.
Then we have $\sum_{v\in T}d_{G-S}(v)\geq (a+1)t$ and hence
\begin{align*}
  -2\geq \delta(S,T)=2|S|-2|T|+\sum_{v\in T}d_{G-S}(v)-h_G(S,T)\geq -2a+(a+1)t-h_G(S,T),
\end{align*}
which implies that
\begin{align}
  h_G(S,T)\geq (a+1)t-2(a-1)=(a-1)(t-2)+2t\geq 2t>|S|+t,\label{eq:2factor tutte1}
\end{align}
where the third inequality holds since $1\leq a\leq t$.

If every odd component $D\in\mathcal{H}_G(S,T)$ has at least $t+1$ vertices, then Lemma~\ref{lem:minimal barrier} implies that there is a vertex $x_D\in V(D)\setminus N_G(T)$ for every $D\in \mathcal{H}_G(S,T)$, 
and hence $I=T\cup \{x_D\mid D\in\mathcal{H}_G(S,T)\}$ is an independent set of $G$ with $|I|=t+h_G(S,T)>|S|+h_G(S,T)\geq d_G(v_1)\geq \delta_G(I)$, a contradiction.
Thus, there is an odd component $D_1\in \mathcal{H}_G(S,T)$ such that $|V(D_1)|\leq t$.
Let $x_1$ be a vertex of $D_1$ such that $e(x_1,T)$ is minimum among all vertices of $D_1$.
Then we have 
\begin{align*}
  d_{G-S}(x_1)=d_{D_1}(x)+e(x_1,T)\leq |V(D_1)|-1+\frac{t}{|V(D_1)|}\leq t,
\end{align*}
where the last inequality follows from $1\leq |V(D_1)|\leq t$.
Thus, $d_G(x_1)\leq |S|+d_{G-S}(x_1)\leq |S|+t$.
Now we let $I'=\{x_1\}\cup \{x_D\mid D\in \mathcal{H}_G(S,T)\setminus\{D_1\}\}$ where $x_D\in V(D)$ is an arbitrary vertex for each $D\in \mathcal{H}_G(S,T)\setminus\{D_1\}$.
It is obvious that $I'$ is an independent set of $G$, and by (\ref{eq:2factor tutte1}), we have $|I'|=h_G(S,T)> |S|+t\geq d_G(x_1)\geq \delta_G(I')$, a contradiction.
Hence, we conclude that $d_{G-S}(v_1)\leq a$.

Since $d_{G-S}(v_1)\geq d_G(v_1)-|S|\geq t-|S|=a$, we have $d_{G-S}(v_1)=a$ and thus $d_G(v_1)=t$.
If there is a vertex $x\in V(G)\setminus(S\cup T\cup N_G(T))$, then $T\cup \{x\}$ is an independent set of $G$ with $|T\cup\{x\}|>t=d_G(v_1)\geq \delta_G(T\cup \{x\})$, a contradiction.
Thus, $V(G)=S\cup T\cup N_G(T)$.
In particular, there is no even component of $(G;S,T)$ by Lemma~\ref{lem:minimal barrier}.
By Lemma~\ref{lem:minimal barrier}, there are odd components $D_1, D_2, \dots , D_a$ of $(G;S,T)$ such that $|N_G(v_1)\cap V(D_i)|=1$ for every $i\in [a]$.
We set $N_G(v_1)\cap V(D_i)=\{x_i\}$.
Let $\mathcal{H}_G'(S,T)=\mathcal{H}_G(S,T)\setminus\{D_1, D_2, \dots , D_a\}$.
If there are more than $t-1$ odd components in $\mathcal{H}_G'(S,T)$, then $\{v_1\}\cup\{x_D\mid D\in \mathcal{H}_G'(S,T)\}$ is an independent set of $G$ where $x_D$ is an arbitrary vertex of $D$, and this violates the assumption.
Thus, we have $h_G(S,T)=a+|\mathcal{H}_G'(S,T)|\leq a+t-1$.
Since $(S,T)$ is a barrier of $G$, we have
\begin{align*}
  -2\geq \delta(S,T)=2|S|-2|T|+\sum_{v\in T}d_{G-S}(v)-h_G(S,T)
  &\geq -2a+\sum_{v\in T}d_{G-S}(v)-(a+t-1)\\
  &=-3a-t+1+\sum_{v\in T}d_{G-S}(v),
\end{align*}
which implies that $\sum_{v\in T}d_{G-S}(v)\leq t+3a-3$.
By the choice of $v_1$, we have $at\leq \sum_{v\in T}d_{G-S}(v)\leq t+3a-3$, and thus
\begin{align}
  (t-3)(a-1)=at-t-3a+a\leq 0.\label{eq:2factor tutte2}
\end{align}
This together with the fact $1\leq a\leq t$ implies that either $a=1$, or $(t,a)\in\{(2,2), (3,2)\}$.
Furthermore, if $a=1$ or $t=3$, then (\ref{eq:2factor tutte2}) holds by equalities.

Suppose first that $(t,a)=(2,2)$.
Then we have $S=\emptyset$ and $d_G(v_1)=a=2$.
Let $D_1$ and $D_2$ be odd components of $(G;S,T)$ such that there is a vertex $x_i\in N_G(v_1)\cap V(D_i)$ for $i\in \{1,2\}$.
If both $D_1$ and $D_2$ have at least 2 vertices, then there is a vertex $y_i\in V(D_i)\setminus\{x_i\}$ for each $i\in \{1,2\}$ and $\{v_1, y_1, y_2\}$ is an independent set of $G$ that violates the assumption.
Hence, without loss of generality, we may assume that $V(D_1)=\{x_1\}$.
Since $G$ has the minimum degree at least 2, $x_1$ has a neighbor other than $v_1$, which must be $v_2$.
This implies that $e(T, V(D_1))=2$, a contradiction by the assumption that $D_1$ is an odd component of $(G;S,T)$.

Next, we assume that $(t,a)=(3,2)$.
Since (\ref{eq:2factor tutte2}) holds by equality, we know that $h_G(S,T)=2+3-1=4$ and $d_{G-S}(v)=2$ for every $v\in T$.
Let $\mathcal{H}_G(S,T)=\{D_1, D_2, D_3, D_4\}$.
We have $e(T, V(D_1)\cup V(D_2)\cup V(D_3)\cup V(D_4))=6$ and one of the odd components, say $D_1$, satisfies $e(T, V(D_1))=1$.
Let $V(D_1)=\{x_1\}$, and without loss of generality, we may assume that $v_1x_1\in E(G)$.
Since $|S|=1$, we have $d_G(x_1)=2$, and thus $\{x_1, v_2, v_3\}$ is an independent set that violates the assumption, a contradiction.

Thus, we have $a=1$.
Since (\ref{eq:2factor tutte2}) holds by equality, it follows that $h_G(S,T)=a+t-1=t$ and $d_{G-S}(v)=1$ for every $v\in T$.
Hence, we may assume that $\mathcal{H}_G(S,T)=\{D_1, D_2, \dots , D_t\}$ and $N_G(v_i)\setminus S=\{x_i\}\subseteq V(D_i)$ for every $i\in [t]$.
As $\bigcup_{i=1}^{t}V(D_i)\subseteq N_G(T)$, we know that $V(D_i)=\{x_i\}$ for every $i\in [t]$.
Since $\{x_1, x_2, \dots , x_t\}$ is an independent set of $G$, we have $d_G(x_i)\geq t$ for each $i\in [t]$.
Thus, each vertex of $T\cup\{x_1, x_2, \dots , x_t\}$ is adjacent to all vertices of $S$, implying that $G$ belongs to the class $\mathcal{G}_t$.
This completes the proof of Theorem~\ref{thm:sigma1}.

\section{2-factor with a bounded number of components}\label{sec:k components}

\subsection{Proof of Theorem~\ref{thm:sigma0 k comp}}

\setcounter{claim}{0}

The case $k=1$ follows from Ore's theorem in \cite{O1960}.
We fix an integer $k\geq 2$.
Let $G$ be a graph of order $n$ which satisfies the assumptions of Theorem~\ref{thm:sigma0 k comp}.
We take a cut set $S=\{v_1, v_2, \dots , v_s\}$ of $G$ such that $\omega(G-S)\geq s+k-1$.
Let $\{G_1,G_2,\dots ,G_{s+k-1+j}\}$ be the set of components of $G-S$ where $j$ is a non-negative integer, and let $n_i=|G_i|$ for each $i\in [s+k-1+j]$.

\begin{claim}\label{cl:2factor comp mindegree}
  $\delta(G-S)\geq k$. In particular, for every $i\in [s+k-1+j]$, $n_i\geq k+1$.
\end{claim}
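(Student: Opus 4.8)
The plan is to argue by contradiction, building a large independent set out of one vertex from each component of $G-S$ and playing its cardinality off against the degree hypothesis $|I|\le \delta_G(I)-1$. Write $m=s+k-1+j$ for the number of components, so that $m\ge s+k-1$. Suppose, toward a contradiction, that $\delta(G-S)\le k-1$, and pick a vertex $w$ attaining this minimum, say $w\in V(G_1)$, so that $d_{G-S}(w)\le k-1$. First I would bound the degree of $w$ in the whole graph: the only edges at $w$ not already counted inside its component go to $S$, so $d_G(w)\le d_{G-S}(w)+|S|\le (k-1)+s$.

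The key construction is an independent set of size exactly $m$. Take $w$ together with one arbitrary vertex from each of the remaining components $G_2,\dots,G_m$. Any two vertices lying in distinct components of $G-S$ are non-adjacent in $G$ (an edge between two vertices outside $S$ would be an edge of $G-S$, contradicting that they lie in different components), so the resulting set $I$ is genuinely independent in $G$ and has $|I|=m$. The one point I must be careful about here is precisely this cross-component independence in $G$ rather than merely in $G-S$; together with $w\in I$ it yields the estimate $\delta_G(I)\le d_G(w)$.

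Applying the hypothesis to $I$ then gives $m=|I|\le \delta_G(I)-1\le d_G(w)-1\le (k-1+s)-1=s+k-2$, which contradicts $m\ge s+k-1$. Hence $\delta(G-S)\ge k$. For the final assertion, a vertex of degree at least $k$ inside its own component $G_i$ already forces $k$ distinct neighbours there, so $n_i=|G_i|\ge k+1$. I do not anticipate a genuine obstacle in this argument; the only things to watch are the bookkeeping of the $\pm 1$ terms in the chain of inequalities and the (harmless) observation that the assumption $\sigma_{k+1}(G)\ge n$ is not needed for this particular claim, being reserved for later in the proof of the theorem.
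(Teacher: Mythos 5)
Your argument is correct and is essentially the paper's proof in contrapositive form: both take one vertex from each component of $G-S$ to form an independent set containing the target vertex, apply the hypothesis $|I|\le\delta_G(I)-1$, and subtract $|S|$ to bound the degree inside $G-S$. The paper simply argues directly that $d_G(x)\ge s+k$ for every $x\notin S$ rather than by contradiction, so no substantive difference.
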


\begin{proof}
  Since $\omega(G-S)\geq s+k-1$, for every vertex $x\in V(G)\setminus S$, there is an independent set $I_x$ of $G$ with $s+k-1$ vertices, including $x$.
  We have $d_G(x)\geq \delta_G(I_x)\geq |I_x|+1=s+k$, which implies that $d_{G-S}(x)\geq d_G(x)-s\geq k$.
  Thus, $\delta(G-S)\geq k$ and every component of $G-S$ has at least $k+1$ vertices.
\end{proof}

Now we consider the case $s=1$.
Suppose first that $j\geq 1$, and thus $s+k-1+j\geq k+1$. 
For each $i\in [k+1]$, we arbitrarily choose $x_i\in V(G_i)$ and consider an independent set $\{x_1, x_2, \dots , x_{k+1}\}$ of $G$.
Since $d_G(x_i)\leq n_i-1+s=n_i$, we have $\sigma_{k+1}(G)\leq \sum_{i=1}^{k+1} d_G(x_i)\leq \sum_{i=1}^{k+1} n_i\leq n-1$, a contradiction.
Hence, we have $j=0$ and thus $G-S$ has exactly $k$ components.
If every component of $G-S$ is Hamiltonian, then obviously $G-v_1$ contains a 2-factor with exactly $k$ cycles, and $G$ has a 2-factor with at most $k$ cycles by Lemma~\ref{lem:1 vertex inclusion}. 
Thus, we may assume that there is a non-Hamiltonian component of $G-v_1$.
Without loss of generality, we may assume that $G_1$ is not Hamiltonian.
As $n_1\geq k+1\geq 3$, $G_1$ is not complete, and thus there are non-adjacent vertices $x_1$ and $y_1$ of $G_1$ such that $d_{G_1}(x_1)+d_{G_1}(y_1)=\sigma_2(G_1)\leq n_1-1$ by a result in Ore~\cite{O1960}.
For each $i\in \{2,3,\dots ,k\}$, let $x_i$ be an arbitrary vertex of $G_i$.
By our assumption of $\sigma_{k+1}(G)\geq n$, we have
\begin{align*}
  \sum_{i=1}^{k} n_i+1=n&\leq d_G(x_1)+d_G(y_1)+\sum_{i=2}^{k}d_G(x_i)\\
  &\leq (d_{G_1}(x)+1)+(d_{G_1}(y)+1)+\sum_{i=2}^{k}n_i \leq \sum_{i=1}^{k}n_i+1.
\end{align*}
Thus, all inequalities above hold by equalities, which derives the followings;
\begin{itemize}
  \item $\sigma_2(G_1)=n_1-1$,
  \item if non-adjacent vertices $x$ and $y$ of $G_1$ satisfies $d_{G_1}(x)+d_{G_1}(y)=n_1-1$, then $x,y\in N_G(v_1)$, and
  \item for every $i\in \{2,3,\dots k\}$, $G_i$ is a complete graph of order at least $k+1\geq 3$.
\end{itemize}
The first two conditions imply that $G[V(G_1)\cup\{v_1\}]$ is Hamiltonian, and the last condition implies that $G_i$ is Hamiltonian for every $i\in \{2,3,\dots k\}$.
Combining these, we obtain a 2-factor of $G$ with exactly $k$ cycles.

In the rest of the proof, we may assume that $s\geq 2$, and thus $s+k-1+j\geq k+1$.

\begin{claim}\label{cl:2factor comp jequal0}
  $j=0$ and $s\leq k+1$.
  Furthermore, if $s\geq 3$, then every component of $G-S$ is Hamilton-connected.
\end{claim}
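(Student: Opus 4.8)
The plan is to exploit the degree-sum hypothesis $\sigma_{k+1}(G)\ge n$ together with Claim~\ref{cl:2factor comp mindegree} by forming independent sets that take one vertex from each of several components of $G-S$. Since a vertex $x$ lying in a component $G_i$ has all its neighbours inside $V(G_i)\cup S$, its degree satisfies $d_G(x)\le (n_i-1)+s$. Thus any $(k+1)$-element independent set obtained by choosing one vertex in each of $k+1$ distinct components yields, via $\sigma_{k+1}(G)\ge n$, an upper bound on $n$ in terms of the orders of those components, while the trivial identity $n=s+\sum_i n_i$ gives a matching lower bound. Comparing the two should pin down both the number of components and the size of $S$.

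Concretely, I would order the components so that $n_1\le n_2\le\cdots\le n_m$ with $m=s+k-1+j$, pick one vertex from each of the $k+1$ smallest components (legitimate because $s\ge 2$ forces $m\ge k+1$), and apply $\sigma_{k+1}(G)\ge n$ to get
\[
n\le\sum_{i=1}^{k+1}\bigl((n_i-1)+s\bigr)=\sum_{i=1}^{k+1}n_i+(k+1)(s-1).
\]
Subtracting this from $n=s+\sum_{i=1}^{m}n_i$ leaves $\sum_{i=k+2}^{m}n_i\le ks-k-1$. The left-hand side runs over $s+j-2$ components, each of order at least $k+1$ by Claim~\ref{cl:2factor comp mindegree}; when at least one such component exists this gives $(s+j-2)(k+1)\le ks-k-1$, which simplifies to $s+(k+1)j\le k+1$. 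As $s\ge 2$, this already forces $j=0$ (otherwise the left side is at least $k+3$), and then $s\le k+1$. The only remaining possibility, $s+j=2$, means $s=2,\ j=0$, for which the conclusion holds directly.

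For the ``furthermore'' statement, suppose $s\ge 3$ and that some component $G_i$ is not Hamilton-connected. Since $n_i\ge k+1\ge 3$, an Ore-type degree-sum condition for Hamilton-connectedness (non-adjacent pairs having degree sum at least $n_i+1$) supplies, in its contrapositive, two non-adjacent vertices $x,y\in V(G_i)$ with $d_{G_i}(x)+d_{G_i}(y)\le n_i$. I would then enlarge $\{x,y\}$ to a $(k+1)$-element independent set by adjoining one vertex from each of $k-1$ further components (available because, with $j=0$, there are $s+k-2\ge k+1$ other components), and invoke $\sigma_{k+1}(G)\ge n$ once more. Bounding the degrees as before and using $d_{G_i}(x)+d_{G_i}(y)\le n_i$ reduces, after cancelling the charged components against $n=s+\sum_i n_i$, to $(s-1)(k+1)\le k(s-1)+1$, i.e.\ $s-1\le 1$. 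This contradicts $s\ge 3$, so every component must be Hamilton-connected.

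The main obstacle I anticipate is bookkeeping: the independent sets must be chosen so that exactly the intended components are ``charged'', while the uncounted components are each made to contribute their guaranteed order $k+1$; a careless selection weakens the bound and fails to separate the cases cleanly. The only genuinely external ingredient is the Hamilton-connected analogue of Ore's condition (as opposed to plain Hamiltonicity used elsewhere in the paper), and I would need to state and apply it carefully so that the two low-degree, non-adjacent vertices in $G_i$ are produced correctly.
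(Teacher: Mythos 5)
Your proposal is correct. The first half ($j=0$ and $s\leq k+1$) is essentially the paper's own argument: one vertex from each of $k+1$ components, the bound $d_G(x_i)\leq n_i-1+s$, and a comparison with $n=s+\sum_i n_i$ using Claim~\ref{cl:2factor comp mindegree} for the uncharged components; your inequality $s+(k+1)j\leq k+1$ is exactly the paper's $(k+1)j\leq k+1-s$. For the ``furthermore'' part, however, you take a genuinely different route. The paper argues indirectly: it relabels so that $G_{s+k-1}$ is a largest component, deduces from the same counting inequality that $\sum_{i=k+2}^{s+k-1}(n_i-k-1)\leq k+1-s$, hence $n_i\leq 2k-1$ for every component, and then combines this with $\delta(G-S)\geq k$ from Claim~\ref{cl:2factor comp mindegree} to verify Ore's Hamilton-connectedness condition $\sigma_2(G_i)\geq 2k\geq n_i+1$ for \emph{every} component at once. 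You instead argue by contradiction: a non-Hamilton-connected component supplies non-adjacent $x,y$ with $d_{G_i}(x)+d_{G_i}(y)\leq n_i$, and placing both into a $(k+1)$-element independent set together with one vertex from each of $k-1$ further components yields $2(s-1)\leq s$ (equivalently your $(s-1)(k+1)\leq k(s-1)+1$), contradicting $s\geq 3$; I checked this computation and it is right. Your version is more in the spirit of the arguments the paper uses later for the $s=1$ and $s=2$ cases and avoids the relabeling step, while the paper's version additionally establishes the quantitative bound $n_i\leq 2k-1$ on all component orders; both rely on the same external ingredient, the Ore-type criterion $\sigma_2(H)\geq |H|+1$ for Hamilton-connectedness.
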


\begin{proof}
  For each $i\in [k+1]$, let $x_i$ be a vertex of $G_i$.
  Since $\sigma_{k+1}(G)\geq n$, we have
  \begin{align*}
    \sum_{i=1}^{s+k-1+j} n_i+s=n\leq \sum_{i=1}^{k+1} d_G(x_i)\leq \sum_{i=1}^{k+1} (n_i-1+s)=\sum_{i=1}^{k+1} n_i+(k+1)(s-1),
  \end{align*}
  so
  \begin{align}
    (k+1)(s-1)-s\geq \sum_{i=k+2}^{s+k-1+j}n_i\geq (k+1)(s+j-2)\label{eq:2factor comp cl2}
  \end{align}
  where the last inequality follows from Claim~\ref{cl:2factor comp mindegree}.
  Thus, we have $(k+1)j\leq k+1-s$, which implies $j=0$ and $s\leq k+1$.

  When $s\geq 3$, we have $s+k-1\geq k+2$.
  Without loss of generality, we may assume that $n_i\leq n_{s+k-1}$ for every $1\leq i\leq s+k-1$.
  By the first inequality of \ref{eq:2factor comp cl2}, we have 
  \begin{align*}
    \sum_{i=k+2}^{s+k-1}(n_i-k-1)=\sum_{i=k+2}^{s+k-1}n_i-(k+1)(s-2)\leq (k+1)(s-1)-s-(k+1)(s-2)=k-s+1.
  \end{align*}
  Since $n_i-k-1\geq 0$ by Claim~\ref{cl:2factor comp mindegree} for every $i\in [s+k-1]$, $n_{s+k-1}\leq k+1+\sum_{i=k+2}^{s+k-1}(n_i-k-1)\leq k+1+k-s+1\leq 2k-1$.
  Thus, $n_i\leq n_{s+k-1}\leq 2k-1$ for every $1\leq i\leq s+k-1$.
  Again by Claim~\ref{cl:2factor comp mindegree}, for each $i\in [s+k-1]$, it follows that $\sigma_2(G_i)\geq 2\delta(G-S)\geq 2k\geq n_i+1$, which implies that $G_i$ is Hamilton-connected by Ore's theorem in \cite{O1960}.
\end{proof}

We first consider the case where there is a component of $G-S$ that is not Hamilton-connected.
By Claim~\ref{cl:2factor comp jequal0}, we know $s=2$, and thus $G-S$ has exactly $k+1$ components.
Without loss of generality, we may assume that $G_1$ is not Hamilton-connected.
As $n_1\geq k+1\geq 3$, $G_1$ is not complete and thus $\sigma_2(G_1)\leq n_1$.
Let $x_1$ and $y_1$ be non-adjacent vertices of $G_1$ such that $d_{G_1}(x_1)+d_{G_1}(y_1)=\sigma_2(G_1)\leq n_1$.
We fix $i\in \{2,3,\dots , k+1\}$ arbitrarily, and for each $j\in [k+1]\setminus\{1,i\}$, let $x_j$ be an arbitrary vertex of $G_j$.
By the assumption that $\sigma_{k+1}(G)\geq n$, we have
\begin{align*}
  \sum_{j=1}^{k+1} n_j+2=n&\leq d_G(x_1)+d_G(y_1)+\sum_{j\in [k+1]\setminus\{1,i\}} d_G(x_j)\\
  &\leq (d_{G_1}(x_1)+2)+(d_{G_1}(y_1)+2)+\sum_{j\in[k+1]\setminus\{1,i\}} (n_j-1+2)\\
  &\leq \sum_{j\in [k+1]\setminus\{i\}} n_j+k+3,
\end{align*}
which forces $n_i\leq k+1$.
Since $n_i\geq k+1$ by Claim~\ref{cl:2factor comp mindegree}, every inequality above holds by equality, which implies that $\sigma_2(G_1)=n_1$ and hence $G_1$ is Hamiltonian.
Furthermore, since the choice of $i\in [k+1]$ is arbitrary, we conclude that $G_i$ is complete and that $V(G_i)\subseteq N_G(v_1)\cap N_G(v_2)$ for every $i\in \{2,3,\dots ,k\}$.
Combining a Hamilton cycle of $G[\{v_1, v_2\}\cup V(G_2)\cup V(G_3)]$ and Hamilton cycles of other components $G_1, G_4, G_5, \dots , G_{k+1}$, we obtain a 2-factor of $G$ with exactly $k$ components.

In the rest of the proof, we assume that every component of $G-S$ is Hamilton-connected.
We define a bipartite graph $H$ by $V(H)=S\cup\{u_1, u_2, \dots , u_{s+k-1}\}$ and $E(H)=\{v_iu_j\mid N_G(v_i)\cap V(G_j)\neq\emptyset\}$.

\begin{claim}\label{cl:2factor comp Hcycle}
  $H$ has a cycle that contains all vertices of $S$.
\end{claim}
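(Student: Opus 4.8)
The plan is to reduce the whole claim to producing enough components that attach to \emph{all} of $S$. Call a component $G_j$ of $G-S$ \emph{$S$-universal} if every vertex of $G_j$ is adjacent to every vertex of $S$; for such $G_j$ the corresponding vertex $u_j$ is adjacent to all of $S$ in $H$. If I can show that at least $s$ components are $S$-universal, say $G_{j_1},\dots,G_{j_s}$, then $v_1 u_{j_1} v_2 u_{j_2}\cdots v_s u_{j_s} v_1$ is a cycle of $H$ containing every vertex of $S$, and the claim follows. So everything comes down to a counting argument guaranteeing these universal components.

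To set it up, for each component $G_j$ I would put $\mu_j=\min_{x\in V(G_j)}e(x,S)$, so that $\mu_j\le s$ with equality exactly when $G_j$ is $S$-universal. The decisive inequality comes from $\sigma_{k+1}(G)\ge n$: for \emph{any} family $J$ of $k+1$ components, picking in each $G_j$ ($j\in J$) a vertex attaining $\mu_j$ gives an independent set of order $k+1$, so $n\le\sigma_{k+1}(G)\le\sum_{j\in J}\big((n_j-1)+\mu_j\big)$. Since $G-S$ has exactly $s+k-1$ components by Claim~\ref{cl:2factor comp jequal0} and each satisfies $n_j\ge k+1$ by Claim~\ref{cl:2factor comp mindegree}, I can bound $\sum_{j\notin J}n_j\ge(s-2)(k+1)$, and after substituting $n=s+\sum_j n_j$ this rearranges to $\sum_{j\in J}(s-\mu_j)\le(k+1)-s$, a bound that is nonnegative because $s\le k+1$.

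First I would use this to show there are at most $k-1$ non-$S$-universal components. If instead there were at least $k$ of them, I would choose $J$ to consist of $k+1$ non-universal components (or, if exactly $k$ non-universal components exist, of those $k$ together with one universal component, which exists since $s+k-1>k$); in either case each non-universal $j\in J$ contributes at least $1$ to $\sum_{j\in J}(s-\mu_j)$, forcing this sum to be at least $k$, which exceeds $(k+1)-s$ because $s\ge 2$ — a contradiction. Hence at least $(s+k-1)-(k-1)=s$ components are $S$-universal, and the cycle is assembled as in the first paragraph.

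The \textbf{main obstacle} is this counting step: squeezing from the single scalar hypothesis $\sigma_{k+1}(G)\ge n$ the strong structural fact that so many components attach to all of $S$. The delicate points to get right are that the degree-sum bound must be applied not to one lucky family but to a well-chosen $J$ (and is valid for every such $J$ precisely because $\sigma_{k+1}(G)$ is a minimum), and that the boundary case $s=2$, where no component lies outside $J$ and the term $(s-2)(k+1)$ vanishes, still yields the bound. Once $s$ universal components are secured, neither Hamilton-connectedness nor the independent-set hypothesis is needed for this claim.
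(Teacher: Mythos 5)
Your proof is correct and follows essentially the same route as the paper: apply $\sigma_{k+1}(G)\geq n$ to an independent set with one vertex chosen from each of $k+1$ components, bound each degree by $n_j-1$ plus the attachment to $S$, use $n_j\geq k+1$ for the $s-2$ leftover components, and conclude by counting that $s$ components attach to all of $S$, which yields the cycle. The only (harmless) difference is that you track the minimum attachment $\mu_j$ and deduce the stronger fact that $s$ components are $S$-universal, whereas the paper tracks $d_H(u_j)$ and deduces only that $s$ of the $u_j$ have degree $s$ in $H$, which is all that is needed.
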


\begin{proof}
  For each $i\in [k+1]$, let $x_i$ be a vertex of $G_i$, and we consider an independent set $\{x_1, x_2, \dots , x_{k+1}\}$ of $G$.
  Since $d_G(x_i)\leq n_i-1+d_H(u_i)$ for each $i\in [k+1]$, by the assumption $\sigma_{k+1}(G)\geq n$, we have
  \begin{align*}
    \sum_{i=1}^{s+k-1} n_i+s=n\leq \sum_{i=1}^{k+1} d_G(x_i)\leq \sum_{i=1}^{k+1} (n_i-1+d_H(u_i))=\sum_{i=1}^{k+1} n_i-k-1+\sum_{i=1}^{k+1} d_H(u_i),
  \end{align*}
  and hence
  \begin{align*}
    \sum_{i=1}^{k+1} d_H(u_i)\geq \sum_{i=k+2}^{s+k-1}n_i+k+1+s\geq (k+1)(s-2)+k+1+s=(k+1)(s-1)+s.
  \end{align*}
  Since $d_H(u_i)\leq s$ for every $i\in [k+1]$, we infer that at least $s$ vertices of $\{u_1,u_2,\dots , u_{k+1}\}$ have degree exactly $s$ in $H$.
  Combining $S$ and such vertices, we obtain a subgraph of $H$ that is isomorphic to the complete bipartite graph $K_{s,s}$.
  It is obvious that the subgraph contains a desired cycle of $H$.
\end{proof}

Let $C_0$ be a cycle of $H$ as in Claim~\ref{cl:2factor comp Hcycle}.
Without loss of generality, we may assume that the vertices of $S$ appear in the order $v_1, v_2, \dots , v_s$ along $C_0$.

\begin{claim}\label{cl:2factor comp cycle reconstruct}
  Let $C=v_1u_{a_1}v_2u_{a_2}v_3\cdots v_su_{a_s}v_1$ be a cycle of $H$ where $a_1, a_2, \dots , a_s$ are pairwise distinct $s$ elements of $[s+k-1]$.
  For each $i\in [s]$, there is an index $b_i\in [s+k-1]\setminus\{a_j\mid 1\leq j\leq s, j\neq i\}$ and a $v_iv_{i+1}$-path $Q_i$ of $G$ such that $V(Q_i)=\{v_i, v_{i+1}\}\cup V(G_{b_i})$, where $v_{s+1}=v_1$.
\end{claim}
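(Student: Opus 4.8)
The plan is to realise each edge-pair $v_i\,u_{a_i}\,v_{i+1}$ of the given cycle $C$ as a path of $G$ that traverses one entire component, exploiting the Hamilton-connectedness of the components, which we are assuming throughout this part of the argument. The natural candidate for $b_i$ is $a_i$ itself: since $u_{a_i}$ is adjacent in $H$ to both $v_i$ and $v_{i+1}$, each of $v_i$ and $v_{i+1}$ has at least one neighbour in $G_{a_i}$, and $|V(G_{a_i})|\ge k+1\ge 3$ by Claim~\ref{cl:2factor comp mindegree}. Note also that, since the claim is stated for each $i$ separately, the indices $b_i$ need not be chosen simultaneously, so a single admissible component may serve several values of $i$.

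First I would dispose of the generic case. Write $A=N_G(v_i)\cap V(G_{a_i})$ and $B=N_G(v_{i+1})\cap V(G_{a_i})$, both nonempty. If one can pick $x\in A$ and $y\in B$ with $x\ne y$, then, $G_{a_i}$ being Hamilton-connected, it has a Hamilton path $P$ from $x$ to $y$; prefixing $v_i$ and suffixing $v_{i+1}$ to $P$ yields a $v_iv_{i+1}$-path $Q_i$ with $V(Q_i)=\{v_i,v_{i+1}\}\cup V(G_{a_i})$, and $b_i:=a_i$ trivially lies outside $\{a_j:j\ne i\}$. Such distinct $x,y$ fail to exist only when $A=B=\{w\}$ for a single vertex $w$, so this handles every $i$ except those in this degenerate situation.

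It remains to treat the degenerate case $A=B=\{w\}$, in which no spanning $v_iv_{i+1}$-path of $G_{a_i}$ exists, as any such path would have to enter and leave $G_{a_i}$ through $w$ alone. Here I would reroute $Q_i$ through a \emph{spare} component, namely an index $c\in[s+k-1]\setminus\{a_1,\dots,a_s\}$; spares exist because $G-S$ has $s+k-1$ components while $C$ uses only $s$ of them, leaving $k-1\ge1$. It suffices to exhibit one spare $c$ for which $v_i$ and $v_{i+1}$ both have neighbours in $G_c$ with distinct representatives available; the path $Q_i$ is then built exactly as before, and $b_i:=c\notin\{a_j:j\ne i\}$ as required.

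I expect the existence of a usable spare to be the main obstacle. The strategy is to argue by contradiction: assume every spare $G_c$ is either missed by one of $v_i,v_{i+1}$ or shared by them only through a single common vertex. The key quantitative input is that a cut vertex reaching $m$ of the components must have large degree: adjoining to $v$ one vertex from each component it misses produces an independent set, whence $|I|\le\delta_G(I)-1$ gives $d_G(v)\ge s+k-m+1$. Combined with $\delta(G-S)\ge k$ from Claim~\ref{cl:2factor comp mindegree} and, when $s\ge3$, the bound $n_c\le 2k-1$ from Claim~\ref{cl:2factor comp jequal0}, this should force the components reached by $v_i$ and by $v_{i+1}$ to overlap substantially among the spares, locating the desired $G_c$; if no such overlap existed, one could instead assemble from the spares an independent set exceeding $\delta_G(\cdot)-1$, or a $(k+1)$-set of small degree sum contradicting $\sigma_{k+1}(G)\ge n$. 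Pinning down this counting, and separately handling $s=2$ (where the components are not a priori small), is where the care is needed.
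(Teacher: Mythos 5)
Your first step (the generic case with $b_i:=a_i$) is correct and is implicit in the paper's argument: since $u_{a_i}$ is adjacent in $H$ to both $v_i$ and $v_{i+1}$, the only way $G_{a_i}$ can fail to carry the desired path is the degenerate situation $N_G(v_i)\cap V(G_{a_i})=N_G(v_{i+1})\cap V(G_{a_i})=\{w\}$. You have also correctly identified the two failure modes for a spare component (missed by one of $v_i,v_{i+1}$, or met only in a single common vertex). However, the decisive step --- deriving a contradiction when \emph{every} admissible component fails --- is exactly the content of the paper's proof, and your proposal leaves it as a sketch of several possible strategies without executing any of them; as written, this is a genuine gap, and the particular tools you name do not obviously close it. The bound ``$d_G(v)\geq s+k-m+1$ for a vertex of $S$ reaching $m$ components'' gives almost nothing here (combined with $n_j\leq 2k-1$ it only forces $m\geq 1$), and $n_j\leq 2k-1$ is only established for $s\geq 3$, so an argument resting on it would still leave $s=2$ open, as you note.

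The paper's actual argument is a single tight degree-sum computation, and it is worth seeing why it is tight. Set $\bar A=[s+k-1]\setminus\{a_j\mid j\neq i\}$, so $|\bar A|=k$ and $\bar A$ consists of $a_i$ together with the $k-1$ spares. Assuming every $G_j$ with $j\in\bar A$ fails, one picks $y_j\in V(G_j)$ avoiding at least one of $v_i,v_{i+1}$ (so $d_G(y_j)\leq n_j-2+s$), with the vertex $y_{a_i}$ avoiding \emph{both} (so $d_G(y_{a_i})\leq n_{a_i}-3+s$), and adds an arbitrary vertex of $G_{a_{i+1}}$ to form a $(k+1)$-element independent set. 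Its degree sum is at most $\sum_{j\in\bar A\cup\{a_{i+1}\}}n_j+(k+1)(s-2)$, while $\sigma_{k+1}(G)\geq n=\sum_j n_j+s$ and $n_{a_\ell}\geq k+1$ for the $s-2$ remaining indices force $s\leq 0$, a contradiction --- the margin is exactly $s$, so every unit of saving (the $-2$ per failed component and the extra $-1$ on $G_{a_i}$) is needed. Your sketch does not record that the component $G_{a_i}$ itself contributes a vertex nonadjacent to both $v_i$ and $v_{i+1}$, nor that the contradiction must come from a $(k+1)$-set assembled from \emph{all} of $\bar A$ at once rather than from pairwise overlap considerations; without that bookkeeping the count does not close.
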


\begin{proof}
  By the symmetry of the indices, it suffices to prove for the case $i=1$.
  Let $\bar{A}$ denote the set $[s+k-1]\setminus\{a_2, a_3, \dots , a_s\}$.
  Assume to the contrary that for every $j\in \bar{A}$, $G$ does not have a $v_1v_2$-path whose vertex set is $\{v_1, v_2\}\cup V(G_j)$.
  As $G_j$ is Hamilton-connected, $G_j$ satisfies one of the followings;
  \begin{enumerate}[label=(\alph*)]
    \item\label{cond:1neighbor} either $N_G(v_1)\cap V(G_j)=\emptyset$ or $N_G(v_2)\cap V(G_j)=\emptyset$, or
    \item\label{cond:0neighbor} there is a vertex $x_j$ of $G_j$ such that $N_G(v_1)\cap V(G_j)=N_G(v_2)\cap V(G_j)=\{x_j\}$.
  \end{enumerate}
  In particular, since $\{v_1,v_2\}\subseteq N_G(V(G_{a_1}))$, $G_{a_1}$ satisfies \ref{cond:0neighbor}.
  For each $j\in \bar{A}$, we define a vertex $y_j$ of $G_j$ as follows: If $G_j$ satisfies \ref{cond:1neighbor}, then let $y_j$ be an arbitrary vertex of $G_j$.
  If $G_j$ satisfies \ref{cond:0neighbor}, then let $y_j\in V(G_j)\setminus\{x_j\}$.
  Then we have $d_G(y_j)\leq n_i-1+(s-1)=n_i-2+s$ for every $j\in \bar{A}$, and in particular, we have $d_G(y_{a_1})\leq n_{a_1}-1+(s-2)=n_{a_1}-3+s$.
  Let $y_{a_2}$ be a vertex of $G_{a_2}$, and consider an independent set $I=\{y_j\mid j\in \bar{A}\cup \{a_2\}\}$.
  Since $|I|=|\bar{A}|+1=(s+k-1)-(s-1)+1=k+1$, we have 
  \begin{align*}
    \sum_{j=1}^{s+k-1}& n_j+s=n\leq \sigma_{k+1}(G)\leq \sum_{j\in\bar{A}\cup \{a_2\}} d_G(y_j)\\
    &\leq (n_{a_1}-3+s)+(n_{a_2}-1+s)+\sum_{j\in\bar{A}\setminus\{a_1\}}(n_j-2+s)
    =\sum_{j\in \bar{A}\cup\{a_2\}}n_j+(k+1)(s-2),
  \end{align*}
  and this together with Claim~\ref{cl:2factor comp mindegree} implies that
  \begin{align*}
    (k+1)(s-2)\leq \sum_{\ell=3}^{s} n_{a_\ell}=\sum_{j=1}^{s+k-1}n_j-\sum_{j\in \bar{A}\cup \{a_2\}}n_j\leq (k+1)(s-2)-s,
  \end{align*}
  a contradiction.
\end{proof}

Applying Claim~\ref{cl:2factor comp cycle reconstruct} to $C_0$ for each $i=1,2,\dots , s$ sequentially, 
we obtain a cycle $C_k=v_1u_{b_1}v_2u_{b_2}v_3\cdots v_su_{b_s}v_1$ of $H$ such that for every $i\in [s]$, there is a $v_iv_{i+1}$-path $Q_i$ of $G$ with $V(Q_i)=\{v_i, v_{i+1}\}\cup V(G_{b_i})$ ($v_{s+1}=v_1$).
Then a cycle $Q=v_1Q_1v_2Q_2v_3\cdots v_sQ_sv_1$ is a Hamilton cycle of $G[S\cup \bigcup_{i=1}^s V(G_{b_i})]$.
Combining $Q$ and a Hamilton cycle of each component $G_j$ with $j\in [s+k-1]\setminus\{b_i\mid 1\leq i\leq s\}$, we obtain a 2-factor of $G$ with exactly $k$ components.
This completes the proof of Theorem~\ref{thm:sigma0 k comp}.

\subsection{Proof of Corollary~\ref{cor:sigma0 2 comp}}

Let $G$ be a graph of order $n$ such that $\sigma_3(G)\geq n$ and every independent set $I$ of $G$ satisfies $|I|\leq \delta_G(I)-1$.
If $G$ is 1-tough, then Theorem~\ref{thm:1 tough} implies that $G$ has a dominating cycle $C$.
Since $G-V(C)$ has no edge, by applying Lemma~\ref{lem:1 vertex inclusion} to each vertex of $G-V(C)$, we obtain a Hamilton cycle of $G$.
Thus, we assume that $G$ is not 1-tough, which implies that there is a cut set $S$ of $G$ such that $\omega(G-S)\geq |S|+1=|S|+2-1$.
Then Theorem~\ref{thm:sigma0 k comp} implies that $G$ has a 2-factor with at most two cycles.
This completes the proof of Corollary~\ref{cor:sigma0 2 comp}.

\section*{Acknowledgement}

The author thanks Professor Katsuhiro Ota for giving valuable comments.
The author has been partially supported by Keio University SPRING scholarship JPMJSP2123 and JST ERATO JPMJER2301.

\end{document}